\newcommand{\BlackBoxes}{\global\overfullrule5pt}
\newcommand{\R}{\mathbb{R}} 
\newcommand{\N}{\mathbb{N}} 
\newcommand{\PP}{\mathbb{P}}
\newcommand{\EE}{\mathbb{E}}
\newcommand{\E}{\mathbb{E}}
\newcommand{\1}{\mathbbm{1}}
\newtheorem{theorem}{Theorem}
\newtheorem{lemma}[theorem]{Lemma}
\theoremstyle{definition}
\newtheorem{remark}[theorem]{Remark}
\newtheorem{definition}[theorem]{Definition}
\numberwithin{equation}{section} \numberwithin{theorem}{section}
\def\0{\kern0pt\-\nobreak\hskip0pt\relax}
\def\makeoverbar#1#2#3#4#5#6#7{ \setbox0=\hbox{$\m@th#2\mkern#5mu{{}#3{}}\mkern#6mu$} \setbox1=\null \dimen@=#4\fontdimen8#13 \dimen@=3.5\dimen@
\advance\dimen@ by \ht0 \dimen@=-#7\dimen@ \advance\dimen@ by \wd0
\ht1=\ht0 \dp1=\dp0 \wd1=\dimen@
\dimen@=\fontdimen8#13 \fontdimen8#13=#4\fontdimen8#13
\rlap{\hbox to \wd0{$\m@th\hss#2{\overline{\box1}}\mkern#5mu$}}
\fontdimen8#13=\dimen@}
\def\mylabel#1#2{{\def\@currentlabel{#2}\label{#1}}}
\begin{document}
\title[Relative portfolio optimization via a VaR-based constraint]{Relative portfolio optimization via a value at risk based constraint}
\author[N. \smash{B\"auerle}]{Nicole B\"auerle$^*$}
\address[N. B\"auerle]{Department of Mathematics,
Karlsruhe Institute of Technology (KIT), D-76128 Karlsruhe, Germany, \href{mailto:nicole.baeuerle@kit.edu}{nicole.baeuerle@kit.edu}, ORCID 0000-0003-0077-3444\footnote{$^*$Corresponding author}}


\author[T. \smash{G\"oll}]{Tamara G\"oll}
\address[T. G\"oll]{Department of Mathematics,
Karlsruhe Institute of Technology (KIT), D-76128 Karls\-ruhe, Germany, \href{mailto:tamara.goell@kit.edu} {tamara.goell@kit.edu}, ORCID 0009-0007-8341-723X}



%

\begin{abstract}
In this paper, we consider $n$ agents who invest in a general financial market that is free of arbitrage and complete. The aim of each investor is to maximize her expected utility  while ensuring, with a specified probability, that her terminal wealth exceeds a benchmark defined by her competitors' performance. This setup introduces an interdependence between agents, leading to a search for Nash equilibria. In the case of two agents and CRRA utility, we are able to derive all Nash equilibria in terms of terminal wealth. For $n>2$ agents and logarithmic utility we distinguish two cases. In the first case, the probabilities in the constraint are small and we can characterize all Nash equilibria. In the second case, the probabilities are larger and we look for Nash equilibria in a certain set.   We also discuss the impact of the competition using some numerical examples. As a by-product, we solve some portfolio optimization problems with probability constraints.
\end{abstract}
\maketitle


\makeatletter \providecommand\@dotsep{5} \makeatother



\vspace{0.5cm}
\begin{minipage}{14cm}
{\small
\begin{description}
\item[\rm \textsc{ Key words}]
{\small Nash equilibrium; competitive investment; Value at risk; CRRA utility}

\end{description}
}
\end{minipage}\\

\textit{Statements and Declarations:} The authors have no relevant financial or non-financial interests to disclose. Data sharing is not applicable to this article as no datasets were generated or analyzed during the current study.

\section{Introduction}
In standard asset allocation problems, a single agent typically invests in a financial market to optimize an objective, such as expected utility or mean-variance. In reality, however, investors are rarely isolated; they are often influenced by their relative performance compared to others. In this paper, we examine a scenario with  $n$ agents investing in a shared financial market, where each agent aims to maximize her expected utility while ensuring, with a specified probability, that her terminal wealth exceeds a benchmark defined by her competitors' performance. For instance, an agent might require that, with $90\%$ probability, her final wealth surpasses the average wealth of her competitors. This setup introduces a strategic layer to the optimization problem, where agents' objectives are interdependent, leading to a search for Nash equilibria under probabilistic constraints. Consequently, our framework integrates strategic interactions among agents with value-at-risk constraints.

Before we have a closer look at our results, let us mention the related literature. There have been a number of papers recently which consider strategic interactions between agents in asset allocation problems. The motivation stems from \cite{brown2001careers} and \cite{kempf2008tournaments}. The majority of papers model the interaction by maximizing the expected utility of relative wealth in financial markets of Black Scholes type. Relative wealth may be measured by a linear function of the terminal wealths of all agents, see e.g.\ \cite{espinosa2015optimal} and \cite{espinosa2010stochastic}, or by a multiplicative function of the terminal wealths, see e.g.\ \cite{basak2014strategic}. {\color{black} More precisely, this means that $X_T^{i,\pi^i} - \frac{\theta_i}{n-1}\sum_{j\neq i}X_T^{j,\pi^j}$ or $X_T^{i,\pi^i} \big( \sum_{j\neq i}X_T^{j,\pi^j}\big)^{-\frac{\theta_i}{n-1}}$ enter the utility function of agent $i$ where $X_T^{i,\pi^i}$ is the  wealth of agent $i$ at the terminal time $T$. For example, \cite{basak2015competition} consider two agents investing in a Black Scholes model and maximizing the power utility of the ratio of both wealths. The difference between own wealth and arithmetic mean of other agents' wealth has been treated in \cite{espinosa2015optimal}. } 
\cite{lacker2019mean} investigate a model where each agent has her own financial market and consider the corresponding mean-field approach. This work has later been extended to investment-consumption in \cite{lacker2020many}. \cite{fu2023mean} and \cite{fu2023mean2} provide a relation between Nash equilibria and systems of forward backward stochastic differential equations for agents with power utility and multiplicative relative performance for investment and consumption. Forward utilities  with competition are for example considered in \cite{musiela2006investments}, \cite{dos2019forward} and \cite{anthropelos2022competition}. Models with more general financial market have been treated in \cite{bauerle2023nash}, \cite{kraft2020dynamic}, \cite{hu2021n}, \cite{aydougan2024optimal}. Papers which model in addition price impacts of the investors are \cite{bauerle2024nash} and \cite{curatola2024asset}.

On the other hand, there is a stream of literature considering benchmark and value-at-risk constraints for one agent problems. 
For example \cite{basak2001value} maximize expected utility under the constraint that, with a certain probability, the terminal wealth is above a given deterministic threshold. This is equivalent to requiring that  the value-at-risk at the terminal time of the wealth is below a threshold. \cite{gabih2005dynamic,gabih2009utility,gabih2006optimal} consider a similar  point of view  replacing the value-at-risk by expected shortfall and \cite{sass2010optimal} and \cite{bauerle2022optimal} extend this situation to problems with partial information. {\color{black} But in these models there is only a single agent and no competition involved.}

In our model, we combine these two aspects. We consider $n$ agents who invest into the same financial market. We do not need a specific model for this market. It should be free of arbitrage and complete. The agents aim at maximizing their expected utility at terminal time $T$. Most of the paper is on logarithmic utility, but we also consider power utility. As a constraint, the probability that one's wealth exceeds a linear combination of the other agents' wealth is bounded from below.
Thus, we adapt the model of \cite{basak2001value} to include relative concerns by replacing the constant solvency level in the optimization problem of agent $i$ by a weighted arithmetic mean of the other $n-1$ agents' terminal wealth.  {\color{black} The motivation for such a model is threefold: First there is a psychological argument for private investors. Whether or not a return feels like a gain or loss is often relative to how other investors (friends, colleagues) performed or how the general stock market performed. This has for example been implemented in the cumulative prospect theory of \cite{tversky1992advances} by introducing a reference point. Thus, investors try to beat such reference points. Second, fund managers often receive a part of their bonuses for outperforming other funds or benchmarks, see e.g.\ \cite{browne1999reaching}. Thus, part of their attention is dedicated to outperform these benchmarks with at least a certain probability. Third, this can equivalently be considered as a portfolio optimization problem with a stochastic value-at-risk  constraint, i.e. a value-at-risk constraint where the risk level is stochastic. }

 \cite{bell1980competitive} consider a simple static two-person zero sum game where the payoff is the probability of beating the opponent's outcome. There are also links to what is known as Colonel Blotto games, see \cite{kovenock2021generalizations}. We incorporate the constraint 
\begin{equation}\label{eq:var_constraint}
\PP\Big(X_i \geq \sum_{j\neq i} \beta_{{\color{black}ij}} X_j\Big)\geq \alpha_i
\end{equation}
into the optimization problem of agent $i$, where $\alpha_i \in [0,1]$ and {\color{black}$\beta_{ij} \in [0,1]$ for all $i,j\in \{1,\ldots,n\}$}. By $X_j$ we denote the terminal wealth of agent $j\in \{1,\ldots,n\}$. Thus, in a fraction of $\alpha_i$ of the possible market scenarios, agent $i$ attains a terminal wealth which is at least as large as the weighted average of her competitors' terminal wealth. The objective function of agent $i$ is given by  the expected utility $\EE U(X_i)$ of her terminal wealth where $U$ is a CRRA utility.  
A possible choice is $\beta_{{\color{black}ij}} = \frac{1}{n-1}$, but we allow for more generality at this point. It is, for example, possible to consider weights in terms of the initial capital invested by the agents so that a larger initial investment goes along with a larger weight assigned to the corresponding agent. 
If $\alpha_i$ is chosen close to $1$, agent $i$ wants to insure her terminal wealth against the other agents' wealth in almost all possible scenarios, while a value $\alpha_i$ close to $0$ implies that she does not care as much about her performance with respect to the others. 
We look for Nash equilibria for this problem in terms of the final wealth. Depending on the specific parameters there are multiple or unique Nash equilibria. In the case of two agents, we obtain rather explicit results and are able to determine all Nash equilibria. In particular some discontinuity phenomena show up. In case of a logarithmic utility, as soon as some probabilities for the constraints are less than one, the structure of the classical optimal terminal wealth is enforced in the Nash equilibrium. The situation with many agents is considerably more difficult. We have to distinguish how large the probabilities for the constraints are. If the sum of all probabilities is at most one, the wealth constraints for the agents will be satisfied on disjoint events for all Nash equilibria.

This paper is organized as follows. In the next section, we comment on the underlying financial market. Section \ref{sec:two} deals with the competition of two agents. We consider logarithmic and power utility there and derive all Nash equilibria rather explicitly.  Section \ref{sec:multi} is then dedicated to the $n$ agent case with $n\ge 3.$ In this section, we have to distinguish the cases $\alpha_1+\ldots +\alpha_n\le 1$ and $\alpha_1+\ldots +\alpha_n> 1.$ While we are able to determine the structure of all Nash equilibria in the first case, we restrict to searching for Nash equilibria of specific type in the second case. 
The last section discusses some numerical results and the appendix provides some auxiliary results.

\section{Financial market}\label{sec:FM}
 We do not introduce a specific financial market, but assume that it is free of arbitrage and complete and that the underlying probability space is continuous. An important example is the Black Scholes market which consists of a riskless bond with interest rate $r$, where we set $r= 0$, and $d$ stocks. Thus, if $W=(W_1,\ldots,W_d)$ is a $d$-dimensional Brownian motion, then the price processes for stocks $k=1,\ldots,d$  are given by
\begin{align*}
\mathrm{d}S_k(t)=&S_k(t)\Big(\mu_k\,
\mathrm{d}t+\sum_{\ell =1}^d\sigma_{k\ell }\,\mathrm{d}W_{\ell }(t)\Big),
\end{align*}
where $S_k(0)=1$, $\sigma_{k\ell}\ge 0$. By $\mu = (\mu_1,\ldots,\mu_d)\in \R^d$ we denote the drift vector and by $\sigma = (\sigma_{k\ell})_{1\leq k,\ell \leq d}$ the volatility matrix, which has to be regular. But as mentioned before, this is just a special example. We assume that all random variables are defined on a probability space $(\Omega,\mathcal{F},\PP)$ where $\Omega$ is continuous, e.g. $\Omega=\R$ and $\mathcal{F}=\mathcal{F}_T^W$ is generated by the Brownian motions up to time $T$.
 By $Z_T$ we denote the state price density of the financial market. The price at time $t=0$ of a contingent claim $H$ which is an integrable, $\mathcal{F}$-measurable random variable, is thus given by $\EE[Z_TH].$  In the example of a Black Scholes market with zero interest,  this would be
$$ Z_T = \exp\Big( -\theta^\top W_T-\frac12 \| \theta\|^2 T \Big)$$
with $\theta =\sigma^{-1}\mu \in \R^d.$ Trading strategies are defined as $(\mathcal{F}_t^W)-$ progressively measurable processes $(\pi_t^k)$ denoting  the amount of money invested at time $t$ in stock $k$.
Since we compute the Nash equilibrium in terms of terminal wealths of the agents, we do not have to be specific about the financial market. The  $\mathcal{F}$-measurable random variable $X_i$ always denotes the terminal wealth of agent $i$ at time $T$. Since the financial market is complete, all  $\mathcal{F}$-measurable and integrable  random variables can be attained by a certain portfolio strategy. The initial wealth needed for this portfolio is given by its price $\EE[Z_T X_i].$ For more general semimartingale financial markets which fit into our setup see \cite{bauerle2023nash}.

{\color{black} 
\begin{remark}
In order to motivate the discussion in the next sections, let us consider the following situation within the Black Scholes market with one stock, i.e. $d=1.$  We have only one agent who tries to invest in such a way that she maximizes her expected utility  while beating (a fraction of) the stock price $S_T$ with at least a certain probability, i.e.\ our agent compares her wealth to the stock performance. More precisely, we consider the problem
$$(P_B) \left\{ \begin{array}{l}
\EE \ln (X) \to \max\\
\PP(X \ge  \beta S_T)\ge \alpha,\\
\EE[Z_TX]=x_0,\\
X \mbox{ is } \mathcal{F}-\mbox{measurable},\\
\end{array}\right.$$
where $X$ is the terminal wealth of the agent.
Thus, the agent maximizes the logarithmic utility of terminal wealth under the constraint that she beats at least with probability $\alpha$ the benchmark $\beta S_T, \beta \in [0,1]$. The initial wealth is $x_0.$ Here we only state  the optimal solution which can be derived with Lemma~\ref{lem:bound2}. We explain the derivation of results like this later, when we treat multi-agent models.
Define
\begin{align}
    \kappa(\lambda) := \Big(\frac{\lambda}{\beta}\Big)^{\frac{\mu}{\mu-\sigma^2}}\exp\Big(-\frac{1}{2}\mu T \Big), \quad \lambda \ge 0, 
\end{align}
and set  $\lambda_\alpha$   such that $ \PP(\beta S_T \le \lambda_\alpha/Z_T)=\alpha.$
Then the optimal terminal wealth is $X^* =x_0/Z_T$ if $x_0\ge \lambda_\alpha$. Thus, if the agent is rich enough, the strategy which simply maximizes the expected utility will also satisfy the constraint. In the case $x_0< \lambda_\alpha$, we obtain
$$ X^* =  \left\{ \begin{array}{cl}
\lambda/Z_T, & \mbox{ if } Z_T \le \kappa(\lambda_\alpha) \mbox{ or } Z_T > \kappa(\lambda),\\
\beta S_T,  & \mbox{ if } Z_T \in ( \kappa(\lambda_\alpha),  \kappa(\lambda)  ],
\end{array}\right.$$
where $\lambda > \lambda_\alpha$ is such that $\EE[X^*Z_T]=x_0.$ Thus, the terminal wealth is set equal to the reference $\beta S_T$ on a certain set of probability $\alpha$ and elsewhere the optimal wealth has the structure of the optimal wealth without constraint.  This problem is related to a number of similar questions which have been treated in the literature before such as maximizing the probability of beating a benchmark or minimizing shortfall \citep[see e.g.][]{browne1999reaching,korn2014portfolio, follmer2016stochastic}.
\end{remark}}

\section{Two agent case}\label{sec:two}
In this section, we consider the problem with two agents and two different utility functions: logarithmic and power utility. It turns out that there is a significant difference in the solution between these utility functions, though it is well-known that the logarithmic utility can be seen as a limiting case of  power utilities. 

\subsection{Logarithmic utility}
We first consider the case where we have $n=2$ agents with  a logarithmic utility. Both agents try to maximize their expected utility of terminal wealth $X_i, i=1,2,$ at time $T$, given a fixed initial capital $x_0^i>0, i=1,2,$ under the constraint that their respective terminal wealth exceeds, with a certain probability, a fraction of the competitors terminal wealth. 
Since $T$ is fixed throughout, we delete it from the notation (except for $Z_T$). {\color{black}We also write $\beta_1$ instead of $\beta_{21}$ and $\beta_2$ instead of $\beta_{12}.$} Thus, agent $i$ faces for fixed $X_j, j\neq i$ with $ i,j\in \{1,2\}$, the problem
$$(P_2) \left\{ \begin{array}{l}
\EE \ln (X_i) \to \max\\
\PP(X_i \ge  \beta_j X_j) \ge \alpha_i,\\
\EE[Z_TX_i]=x_0^i,\\
X_i \mbox{ is } \mathcal{F}-\mbox{measurable,}
\end{array}\right.$$
with $\alpha_i,\beta_i\in [0,1]$ for $i=1,2$. The last equation ensures that the wealth $X_i$ has price $x_0^i$, i.e.\ can be attained by a self-financing strategy with initial wealth $x_0^i.$ {\color{black} Throughout we assume that all parameters, preferences and the opportunity investment set are known to all agents.} Now we are seeking for a Nash equilibrium in this situation in terms of terminal wealths. Formally, a Nash equilibrium is here defined as follows.

\begin{definition}\label{definition nash}
Let $J_i(X_1,X_2)= \EE \ln (X_i)$ be the objective function of agent $i$, given that $(X_1,X_2)$ satisfy the constraints in $(P_2)$. A feasible pair $(X_1^*,X_2^*)$ of terminal wealths is called a \emph{Nash equilibrium}, if 
\begin{equation}
J_1(X_1^*,X_2^*)\geq J_1(X_1,X_2^*), \quad J_2(X_1^*,X_2^*)\geq J_2(X_1^*,X_2) \label{Nash condition}
\end{equation}
for all admissible pairs $(X_1,X_2)$ in $(P_2)$.
\end{definition}

\begin{remark}
\begin{itemize}
    \item[a)]  Without the probability constraint, the optimal terminal wealth in $(P_2)$ for agent $i=1,2$ is given by 
    $$ X_1^* = \frac{x_0^1}{Z_T}, \mbox{   and   } X_2^* = \frac{x_0^2}{Z_T}.$$ 
    This result can be found e.g.\ in \cite{korn2013optionsbewertung}.
\item[b)] Once we have the optimal wealths, they can be replicated by  suitable investment strategies due to the completeness of the financial market.
\end{itemize} 
\end{remark}

In what follows, we explicitly determine all Nash equilibria.  Throughout we assume w.l.o.g.\ that $x_0^1\ge x_0^2,$ i.e.\ agent 1 is at least as rich as agent 2. 
{\color{black}
To obtain the Nash equilibria we have to solve the best response problems $(P_2)$ for both agents and then find a fixed point. The best response problems are utility maximization problems with side constraints of the form $X_i \ge Y \mathbbm{1}_A$ with $\PP(A)=\alpha_i$ where the location of the set $A$ varies. Given that $\lambda/Z_T$ is the form of the optimal wealth in the log-utility problem without constraint, it is not too hard to guess that the optimal wealth with constraint satisfies $X_i = \max\{Y, \lambda/Z_T\}$ on a set of probability $\alpha$ and elsewhere takes the  form $\lambda/Z_T$. The crucial question is where the set $A$ is located. We discuss the solution of the best response problem in detail in the appendix. In Lemma \ref{lem:bound2} in Appendix \ref{sec:A1+} we show that the set $A$ is located where it is cheapest to deviate from $\lambda/Z_T$ to satisfy the constraint, i.e. we increase $\lambda$ such that the set $\{\lambda/Z_T \ge Y\}$ has probability $\alpha.$
} We now have to distinguish several parameter cases.

\begin{description}
    \item[Case I: $\alpha_1=\alpha_2=\beta_1=\beta_2=1$] In this case with probability one we must have $X_1^*\ge X_2^*$ and $X_2^*\ge X_1^*,$ hence $X_1^*=X_2^*$. Obviously, this can only be satisfied when $x_0^1 =x_0^2.$ However, it is easy to see that any pair $(X,X)$ of $\mathcal{F}$-measurable random variables with price $\EE [Z_T X]=x_0^1$ constitutes a Nash equilibrium. This is because for given $X$, the probability constraint already determines the terminal wealth of the second agent. There is nothing to optimize here and the shape of the terminal wealth can be arbitrary.   
     \item[Case II: $\alpha_2=1, \alpha_1= 1, \beta_1\beta_2< 1$]
      Using the result in Appendix \ref{sec:A1}, we obtain that the mutual best responses have to be of the form:
      \begin{equation}
          X_1^* = \max\Big\{ \beta_2 X_2^*, \frac{\lambda_1}{Z_T}\Big\}, \quad X_2^* = \max\Big\{ \beta_1 X_1^*, \frac{\lambda_2}{Z_T}\Big\}
      \end{equation}
     for some $\lambda_1,\lambda_2>0.$
     Since $\beta_1\beta_2<1$, this implies 
     \begin{align}
         X_1^* =  \frac{1}{Z_T}  \max\{ \beta_2\lambda_2, \lambda_1\}, \quad
          X_2^* =  \frac{1}{Z_T}  \max\{ \beta_1\lambda_1, \lambda_2\}.
     \end{align}
       Distinguishing the different cases where the maxima are attained and  respecting the self-financing conditions $ \EE[X_1^*Z_T]=x_0^1,  \EE[X_2^*Z_T]=x_0^2,$ we can see that we obtain a solution only in the case of $x_0^2 \ge \beta_1 x_0^1$ and the unique Nash equilibrium (here and elsewhere uniqueness is always up to sets of probability zero) is given by 
     \begin{align}\label{eq:NECaseII0}
         & X_1^* = \frac{x_0^1}{Z_T} \mbox{   and   } X_2^* = \frac{x_0^2}{Z_T}.
     \end{align}  
 \item[Case III: $\alpha_2=1, \alpha_1< 1, \beta_1\beta_2\le 1$]
 Using the result in Appendix \ref{sec:A1}, we obtain that the mutual best responses have to be of the form:
     \begin{align}\label{eq:NoNE1case1}
         X_1^* &= \max\Big\{ \1_{A_1} \beta_2 X_2^*, \frac{\lambda_1}{Z_T}\Big\}, \\
         \label{eq:NoNE2case1}
          X_2^* &= \max\Big\{ \beta_1 X_1^*, \frac{\lambda_2}{Z_T}\Big\},
     \end{align}
     where $\PP(A_1)=\alpha_1$ and $\lambda_1,\lambda_2>0.$ 
     We can plug in \eqref{eq:NoNE2case1} into \eqref{eq:NoNE1case1} to obtain the following expression:
     \begin{align*}
          X_1^* &= \max\Big\{ \1_{A_1} \beta_2  \max\Big\{ \beta_1 X_1^*, \frac{\lambda_2}{Z_T}\Big\}, \frac{\lambda_1}{Z_T}\Big\}. 
     \end{align*}
     Let us first assume that $\beta_1\beta_2<1$. In this case we obtain:
      \begin{align*}
          X_1^* = \frac{1}{Z_T} \Big( \1_{A_1}  \max\{\beta_2 \lambda_2, \lambda_1\}+ \1_{A_1^c} \lambda_1\Big)
     \end{align*}
     and in return
     \begin{align*}
          X_2^* &= \frac{1}{Z_T}   \max\{ \beta_1 \lambda_1, \lambda_2\}.
     \end{align*}
     Distinguishing the different cases where the maxima are attained and  respecting the self-financing conditions $ \EE[X_1^*Z_T]=x_0^1,  \EE[X_2^*Z_T]=x_0^2,$ we can see that we obtain a solution only in the case of $x_0^2 \ge \beta_1 x_0^1$ and the unique Nash equilibrium is given by 
     \begin{align}\label{eq:NECaseII}
         & X_1^* = \frac{x_0^1}{Z_T} \mbox{   and   } X_2^* = \frac{x_0^2}{Z_T}.
     \end{align}  
     Thus, we obtain a  Nash equilibrium only when the wealth of the second agent is not too small. In the case $\beta_1\beta_2=1$ we must have $\PP(X_2\ge X_1)=1$ which implies, since $x_0^1\ge x_0^2$, that $X_1^*=X_2^*$ and necessarily $x_0^1 = x_0^2.$ Thus, the best response to $X$ in $(P_2)$ has to be $X$ again. Now consider Lemma \ref{lem:bound2}. Let $M_\lambda^X := \{ X\le \lambda/Z_T\}$ and suppose that $X^*$ is the best response to $X$. Obviously $X=X^*$ a.s. implies $M_\lambda^X=M_\lambda^{X^*}.$ However, if $\PP(X \neq x_0^1/Z_T)>0$ then $(M_{\lambda_\alpha}^{X^*})^c \cap M_{\lambda_\alpha}^{X}\neq \emptyset$ which is a contradiction.  Hence $X^*=x_0^1/Z_T$ and   the Nash equilibrium is again as in \eqref{eq:NECaseII}.
\item[Case IV: $\alpha_1\le 1, \alpha_2< 1, \beta_1\beta_2\le  1$] This case requires more work. First we note that due to  the result in Appendix \ref{sec:A1}, we obtain that the mutual best responses have to be of the form:
     \begin{align}\label{eq:NoNE1case2}
         X_1^* &= \max\Big\{ \1_{A_1} \beta_2 X_2^*, \frac{\lambda_1}{Z_T}\Big\} = \1_{A_1} \max\Big\{ \beta_2 X_2^*, \frac{\lambda_1}{Z_T}\Big\}+ \1_{A_1^c} \frac{\lambda_1}{Z_T}, \\ 
         \label{eq:NoNE2case2}
          X_2^* &= \max\Big\{  \1_{A_2} \beta_1 X_1^*, \frac{\lambda_2}{Z_T}\Big\}= \1_{A_2} \max\Big\{ \beta_1 X_1^*, \frac{\lambda_2}{Z_T}\Big\}+ \1_{A_2^c} \frac{\lambda_2}{Z_T},
     \end{align}
     where $\PP(A_i)=\alpha_i, i=1,2$ and $\lambda_1,\lambda_2>0.$ 
     Plugging \eqref{eq:NoNE1case2} into \eqref{eq:NoNE2case2} yields:
     \begin{align}
         X_2^* = \max\Big\{  \1_{A_1\cap A_2} \max\Big\{ \beta_1\beta_2 X_2^*, \frac{\beta_1\lambda_1}{Z_T}\Big\}+ \1_{A_1^c\cap A_2} \frac{\lambda_1\beta_1}{Z_T}, \1_{A_2}\frac{\lambda_2}{Z_T}\Big\}+ \1_{A_2^c} \frac{\lambda_2}{Z_T}.
     \end{align}
     We now assume that $\beta_1\beta_2<1.$
     Simplifying this expression, we end up with
     \begin{align}
         X_2^* =  \frac{1}{Z_T} \Big( \lambda_2  \1_{A_2^c}+ \max\{\beta_1\lambda_1,\lambda_2\} \1_{A_2}\Big).
     \end{align}
     This in turn implies 
     \begin{align}
         X_1^* =  \frac{1}{Z_T} \Big(\lambda_1  \1_{A_1^c}+ \max\{\beta_2\lambda_2,\lambda_1\} \1_{A_1}\Big).
     \end{align}
     We also have to respect the self-financing condition which implies the equations
     \begin{align}
         \EE[X_1^*Z_T] &= \lambda_1(1-\alpha_1) +  \max\{\beta_2\lambda_2,\lambda_1\} \alpha_1 = x_0^1,\\
         \EE[X_2^*Z_T] &= \lambda_2(1-\alpha_2) +  \max\{\beta_1\lambda_1,\lambda_2\} \alpha_2 = x_0^2.
     \end{align}
     Depending on where the maxima are attained, we obtain essentially two cases. 

     First consider $\lambda_1\ge \lambda_2\ge \beta_1\lambda_1.$ Here it follows that 
      \begin{align}
         & X_1^* = \frac{x_0^1}{Z_T} \mbox{   and   } X_2^* = \frac{x_0^2}{Z_T},
     \end{align} 
     and that $ x_0^2 \ge \beta_1 x_0^1$ (note that $x_0^1\ge x_0^2$ throughout). The other case which yields a solution is   $ \beta_1\lambda_1\ge \lambda_2.$ In this case
     \begin{align}
         & X_1^* = \frac{x_0^1}{Z_T} \mbox{   and   } X_2^* = \beta_1  \frac{x_0^1}{Z_T} \1_{A_2} +\frac{\lambda_2}{Z_T}\1_{A_2^c}
     \end{align}
     with $\lambda_2= (x_0^2 -\beta_1x_0^1 \alpha_2)/(1-\alpha_2) \in [0,x_0^2].$  This case  occurs when we have $x_0^1 \beta_1 \ge x_0^2 \ge x_0^1 \beta_1 \alpha_2. $ Note that the position of the set $A_2$ is arbitrary i.e.\ we obtain an infinite number of Nash equilibria in this case.

The case $\beta_1\beta_2=1 $ remains. Proceeding as before, we conclude that only the case $\lambda_1\ge \lambda_2$ leads to solutions which are of the form
      \begin{align}
         X_1^* &=  \frac{\lambda_1}{Z_T} \1_{(A_1\cap A_2)^c} +  \max\Big\{X_2^*,\frac{\lambda_1}{Z_T}\Big\} \1_{A_1\cap A_2},\\
         X_2^*  &=  \frac{\lambda_1}{Z_T} \1_{A_2\setminus A_1} +  \max\Big\{X_2^*,\frac{\lambda_1}{Z_T}\Big\} \1_{A_1\cap A_2}+  \frac{\lambda_2}{Z_T} \1_{A_2^c}.
     \end{align}
     By contradiction we obtain that the maximum in the preceding expression has to be $\lambda_1/Z_T.$ As a result $X_1^*=x_0^1/Z_T$ and the solution has the same form as before.
\end{description}

We summarize our findings in the following theorem:

\begin{theorem}[Logarithmic utility] \label{thm:nash_log2}
Let $\alpha_i,\, \beta_i \in [0,1],\, x_0^i > 0,\, i=1,2.$ If $x_0^2 < \alpha_2 \beta_1 x_0^1$, there is no Nash equilibrium. Otherwise, i.e.\ if $x_0^2 \ge  \alpha_2 \beta_1 x_0^1$, there exist three cases. 
\begin{itemize}
    \item[a)] If $\alpha_i = 1, \, \beta_i = 1,\, i=1,2,$ there are infinitely many Nash equilibria of the form $(X,X)$, where $X$ is an $\mathcal{F}$-measurable random variable with $\EE[Z_T X] = x_0^1$. 
    \item[b)] {\color{black} If $\alpha_2 = 1$ and $ \alpha_1 \beta_1 \beta_2 < 1$ } then the unique Nash equilibrium is given by 
    $$ X_1^* = \frac{x_0^1}{Z_T} \mbox{   and   } X_2^* = \frac{x_0^2}{Z_T}. $$
    \item[c)] If $\alpha_1 \leq 1,\, \alpha_2 < 1,$ there are two  cases. If $x_0^2 \geq \beta_1 x_0^1$, the unique Nash equilibrium is given by 
    $$X_1^* = \frac{x_0^1}{Z_T} \mbox{   and   } X_2^* = \frac{x_0^2}{Z_T}.$$
    Otherwise, i.e.\ if $x_0^2 < \beta_1 x_0^1$ there are infinitely many Nash equilibria of the form
    $$ X_1^* = \frac{x_0^1}{Z_T}  \mbox{   and   } X_2^* = \frac{\beta_1 x_0^1}{Z_T} \mathbbm{1}_{A_2} + \frac{x_0^2 - \alpha_2 \beta_1 x_0^1}{(1-\alpha_2)Z_T} \mathbbm{1}_{A_2^c}$$
    with $A_2\in \mathcal{F}$ such that $\PP(A_2) = \alpha_2.$
\end{itemize}
\end{theorem}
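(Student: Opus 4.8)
The plan is to reduce the search for Nash equilibria to a coupled fixed-point problem built from the single-agent best responses, and then to solve that system explicitly by a case split on the parameters. First I would invoke Lemma~\ref{lem:bound2}: for a fixed competitor wealth $X_j$, the best response of agent $i$ in $(P_2)$ has the form $X_i^* = \max\{\1_{A_i}\beta_j X_j,\ \lambda_i/Z_T\}$, where $\PP(A_i)=\alpha_i$, the set $A_i$ is placed where deviating from $\lambda_i/Z_T$ to meet the constraint is cheapest, and $\lambda_i>0$ is fixed by the budget equation $\EE[Z_T X_i^*]=x_0^i$. A pair $(X_1^*,X_2^*)$ is a Nash equilibrium precisely when both best-response relations hold simultaneously, so the problem becomes the solution of this coupled system.

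Second, I would eliminate one variable by substituting the relation for $X_2^*$ into that for $X_1^*$ (and symmetrically). The key algebraic feature is that when $\beta_1\beta_2<1$ the self-referential term in $X_i^*$ carries the coefficient $\beta_1\beta_2<1$ and can be resolved, leaving $X_1^*$ and $X_2^*$ as explicit piecewise expressions of the form $\frac{1}{Z_T}(\cdots)$ in the constants $\lambda_1,\lambda_2$ and the indicators $\1_{A_1},\1_{A_2}$. Imposing the two budget equations then pins down $\lambda_1,\lambda_2$, and the solution splits according to which argument attains each $\max$. Keeping track of these branches under both budget constraints is where the genuine bookkeeping lies.

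Third, I would organize the outcome by the sizes of $\alpha_1,\alpha_2$, since $\alpha_i=1$ forces $A_i=\Omega$ and removes all freedom in placing $A_i$. The fully degenerate situation $\alpha_i=\beta_i=1$ gives statement~(a): the constraints force $X_1=X_2$ a.s., leaving nothing to optimize, so every pair $(X,X)$ of the correct price is an equilibrium. When $\alpha_2=1$ and $\alpha_1\beta_1\beta_2<1$, the computation collapses to the unconstrained-optimal pair, yielding~(b). When $\alpha_1\le1,\alpha_2<1$, the budget equations produce the dichotomy of~(c): if $x_0^2\ge\beta_1 x_0^1$ the unconstrained pair is forced, whereas if $x_0^2<\beta_1 x_0^1$ one obtains the one-parameter family in which the location of $A_2$ is free. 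The global existence threshold $x_0^2\ge\alpha_2\beta_1 x_0^1$ falls out of requiring $\lambda_2=(x_0^2-\alpha_2\beta_1 x_0^1)/(1-\alpha_2)\ge0$; a strictly negative value is infeasible and yields the no-equilibrium claim.

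The step I expect to be the main obstacle is the degenerate boundary $\beta_1\beta_2=1$, where the substitution no longer collapses the recursion (the $X_2^*$ term does not cancel) and the system cannot be solved algebraically. Here I would argue by contradiction using the level sets $M_\lambda^X=\{X\le\lambda/Z_T\}$ from Lemma~\ref{lem:bound2}: a self-best-response $X$ forces $M_{\lambda_\alpha}^{X}=M_{\lambda_\alpha}^{X^*}$, and any deviation $\PP(X\neq x_0^1/Z_T)>0$ forces $(M_{\lambda_\alpha}^{X^*})^c\cap M_{\lambda_\alpha}^{X}\neq\emptyset$, a contradiction; hence $X=x_0^1/Z_T$ and $x_0^1=x_0^2$, returning these subcases to the forms already found. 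What remains is routine verification: that the claimed $\lambda_i$ are positive, that each proposed $(X_1^*,X_2^*)$ satisfies both the probability and budget constraints and attains the best-response optimum, and that the branch choices are mutually consistent.
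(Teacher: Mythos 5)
Your proposal is correct and follows essentially the same route as the paper: best responses of the form $\max\{\1_{A_i}\beta_j X_j,\lambda_i/Z_T\}$ from the appendix lemmas, substitution of one relation into the other using $\beta_1\beta_2<1$ to resolve the self-reference, a case split on the $\alpha_i$, the level-set contradiction via $M_\lambda^X$ for the boundary $\beta_1\beta_2=1$, and the existence threshold from $\lambda_2=(x_0^2-\alpha_2\beta_1x_0^1)/(1-\alpha_2)\ge 0$. No substantive differences.
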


\begin{remark}\label{rem:choice_A2}
\begin{itemize}
\item[a)] Note that in case a) of the previous theorem, the condition $x_0^2 \geq \alpha_2 \beta_1 x_0^1$ already implies that $x_0^1 = x_0^2$. In parts b) and c) we see that as long as $x_0^2 \geq \beta_1 x_0^1$, the optimal solution $X_i^* = x_0^i Z_T^{-1}$, $i=1,2,$ to the unconstrained problem is always a Nash equilibrium, independent of $\alpha_i$.  {\color{black} This is because  comparing the $X_i^*$ boils down to comparing the constants and the constraint is obviously   satisfied with probability 1. This also makes sense from an economic point of view, because this is the case where the constraint is already satisfied for the initial wealths and due to the 'no arbitrage' condition it carries over to the terminal wealths. As soon as   $x_0^2 < \beta_1 x_0^1$ we always get different Nash equilibria  since the constraint is satisfied with probability 0 for the unconstraint solutions.} 
    \item[b)] There is a very fundamental difference between Case I and Case II when $\beta_1\beta_1=1$ but $\alpha_1<1$ instead of $\alpha_1=1$. As soon as as there is no strict dominance of the final wealths required, the form $\lambda/Z_T$ of the optimal wealth in the single agent portfolio optimization problem carries through to the Nash equilibrium.
    \item[c)]  Note that in case of a logarithmic utility there can be an infinite number of different Nash equilibria. In particular in Case III, it does not matter where the set $A_2$ exactly is. This is different for other utility functions, see the next section with power utility.  
    \item[d)] It is possible to pick other criteria to choose one of the Nash equilibria in case there are several ones. For example in Case I it makes sense to choose the same Nash equilibrium as in b), since it maximizes the expected utility. In Case IV the expected utility of all Nash equilibria are the same. Hence one could simply look for the Nash equilibrium which maximizes $\E[X_2^*].$ It is easy to see (note that here $\beta_1 x_0^1 > x_0^2$) that this is achieved by choosing $A_2:=\{Z_T\le z_{\alpha_2}\}$ where $z_{\alpha_2}$ is the $\alpha_2-$quantile of $Z_T$, i.e.  $ \PP(Z_T \le z_{\alpha_2})={\alpha_2}$.
\end{itemize}
   
\end{remark}

\subsection{\textcolor{black}{Replicating strategies for the Nash equilibrium}}

\textcolor{black}{Until now, we have only discussed the optimal terminal wealth in the Nash equilibrium. However, since we assumed that the financial market is complete, there exist replicating portfolio strategies for the Nash equilibrium from Theorem~\ref{thm:nash_log2}. We use the Black-Scholes market discussed in Section~\ref{sec:FM} in order to give explicit representations of the replicating strategies. In case b) of Theorem~\ref{thm:nash_log2}, the replicating strategies in terms of fractions of wealth invested into the stock are known to be constant \citep[Mertion ratio, see e.g.][]{merton1969lifetime, korn1997optimal} and given by $(\sigma \sigma^\top)^{-1} \mu$. For later comparison, we consider the process $\pi_2^B$ describing the invested amount of wealth (instead of the fraction), which is given by
$$ \pi_2^B(t) = \frac{x_0^2}{Z_t}\big( \sigma \sigma^\top \big)^{-1} \mu.$$}

\textcolor{black}{Thus, we only discuss the replicating strategy of the second investor in case c) of Theorem~\ref{thm:nash_log2}. Moreover, we cannot give an explicit representation for arbitrary sets $A_2$ and therefore only consider sets of the form $A_2 = \{ c_1 < Z_T < c_2\}$ for constants $0 \leq c_1 < c_2 \leq \infty$ with $\PP(c_1 < Z_T < c_2) = \alpha_2$. Then we can use Lemma~\ref{lem:replicating} from Appendix~\ref{sec:A_strategy} to find the portfolio-wealth pair $(X_2^*(t), \pi_2^*(t))_{t\in [0,T]}$ replicating $X_2^*$. To simplify notation, we introduce the abbreviation
\begin{equation}
    f(c,t) := \frac{\log(c) - \log(Z_t) + \frac{1}{2}\Vert \theta \Vert^2 (T-t)}{\Vert \theta \Vert \sqrt{T-t}},\quad t\in [0,T], \, 0 < c < \infty.
\end{equation}
Here, $(Z_t)_{ t\in [0,T]}$ denotes the state price density process, i.e. 
$$ Z_t = \exp\Big( - \theta^\top W_t - \frac{1}{2} \Vert \theta \Vert^2 t\Big), \, t\in [0,T].$$
}
\textcolor{black}{\begin{theorem}\label{thm:repl_strat}
The replicating portfolio-wealth pair for $X_2^*$ from Theorem~\ref{thm:nash_log2} c) is given by 
\begin{align*}
    X_2^*(t) &= \frac{x_0^2 - \alpha_2 \beta_1 x_0^1}{1-\alpha_2}\Big(Y_1(t) + Y_3(t) \Big) + \beta_1 x_0^1 Y_2(t),\\
     \pi_2^*(t) &=  \frac{x_0^2 - \alpha_2 \beta_1 x_0^1}{1-\alpha_2}\Big(\varphi_1(t) + \varphi_3(t) \Big) + \beta_1 x_0^1 \varphi_2(t), 
\end{align*}
where 
\begin{align*}
    Y_1(t) &= \frac{1}{Z_t} \Phi\big(f(c_1, t) \big),\quad Y_2(t) = \frac{1}{Z_t}\big[ \Phi\big(f(c_2,t) \big) - \Phi\big(f(c_1,t) \big)\big], \quad Y_3(t) = \frac{1}{Z_t} \Phi\big(- f(c_2,t) \big),\\
    \varphi_1(t) &= \Bigg[Y_1(t) + \frac{1}{Z_t \Vert \theta \Vert \sqrt{T-t}} \varphi\big(f(c_1,t)\big)\Bigg] \big( \sigma \sigma^\top\big)^{-1} \mu, \\
    \varphi_2(t) &= \Bigg[Y_2(t) +  \frac{1}{Z_t \Vert \theta \Vert \sqrt{T-t}} \big( \varphi\big(f(c_2,t) \big) - \varphi\big(f(c_1,t) \big)\big)\Bigg]\big( \sigma \sigma^\top\big)^{-1} \mu,\\
    \varphi_3(t) &= \Bigg[Y_3(t) - \frac{1}{Z_t\Vert \theta \Vert \sqrt{T-t}} \varphi\big( f(c_2,t)\big) \Bigg] \big( \sigma \sigma^\top)^{-1} \mu.
\end{align*}
Here the portfolio strategy $\pi_2^*$ denotes the amount of wealth invested into the stocks at time $t\in [0,T).$ 
\end{theorem}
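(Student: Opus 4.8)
The plan is to exploit the linearity of replication in a complete market by writing $X_2^*$ as a linear combination of three elementary ``digital'' claims and replicating each separately. Concretely, with $A_2 = \{c_1 < Z_T < c_2\}$ the terminal wealth from Theorem~\ref{thm:nash_log2}~c) reads
\[ X_2^* = \frac{x_0^2 - \alpha_2\beta_1 x_0^1}{1-\alpha_2}\Big(\tfrac{1}{Z_T}\mathbbm{1}_{\{Z_T \le c_1\}} + \tfrac{1}{Z_T}\mathbbm{1}_{\{Z_T \ge c_2\}}\Big) + \beta_1 x_0^1 \,\tfrac{1}{Z_T}\mathbbm{1}_{\{c_1 < Z_T < c_2\}}, \]
so it suffices to replicate each of the three building blocks $\tfrac{1}{Z_T}\mathbbm{1}_{\{Z_T \le c_1\}}$, $\tfrac{1}{Z_T}\mathbbm{1}_{\{c_1<Z_T<c_2\}}$ and $\tfrac{1}{Z_T}\mathbbm{1}_{\{Z_T \ge c_2\}}$ and then add up the resulting wealth and portfolio processes with the above coefficients. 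This yields exactly the $(Y_i,\varphi_i)$ decomposition claimed in the theorem.

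For each block $H = \tfrac{1}{Z_T}\mathbbm{1}_{\{Z_T \in I\}}$ I would first identify the time-$t$ wealth. Since $Z_T H = \mathbbm{1}_{\{Z_T \in I\}}$, the completeness pricing rule gives $V(t) = \EE[Z_T H \mid \mathcal{F}_t]/Z_t = \PP(Z_T \in I \mid \mathcal{F}_t)/Z_t$. Writing $Z_T = Z_t\exp\big(-\theta^\top(W_T - W_t) - \tfrac12\|\theta\|^2(T-t)\big)$ and using that $W_T - W_t$ is independent of $\mathcal{F}_t$ and Gaussian, the conditional probability reduces to a normal tail, producing exactly the $\Phi(f(c,t))$ terms and hence $Y_1,Y_2,Y_3$. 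The correct terminal values are recovered because $f(c,t)\to +\infty$ (resp. $-\infty$) as $t\uparrow T$ according as $Z_T < c$ (resp. $Z_T > c$), so $\Phi(f(c,T)) = \mathbbm{1}_{\{Z_T<c\}}$ up to a null set, and the linear combination collapses back to $X_2^*$.

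The portfolio is then obtained from Lemma~\ref{lem:replicating}, or equivalently by the following martingale argument. The discounted wealth $M_t := Z_t V(t) = \PP(Z_T\in I\mid \mathcal{F}_t)$ is a martingale of the form $\Phi(f(c_2,t)) - \Phi(f(c_1,t))$ for the appropriate block. Applying It\^o's formula and keeping only the diffusion part, and noting that $f$ depends on $W_t$ only through $\theta^\top W_t/(\|\theta\|\sqrt{T-t})$, its integrand in the representation $dM_t = \psi(t)^\top dW_t$ is $\psi(t) = \varphi(f(c,t))\,\theta/(\|\theta\|\sqrt{T-t})$ times the relevant signs. Converting this integrand into the invested amount via the standard complete-market relation $\pi(t) = (\sigma^\top)^{-1}\big(\psi(t)/Z_t + V(t)\theta\big)$, together with $\theta = \sigma^{-1}\mu$ so that $(\sigma^\top)^{-1}\theta = (\sigma\sigma^\top)^{-1}\mu$, gives precisely $\varphi_1,\varphi_2,\varphi_3$.

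I expect the only genuinely delicate point to be the It\^o computation of the diffusion coefficient of $M_t$: one must verify that the drift and second-order terms cancel (as they must, since $M_t$ is a martingale) and isolate the $dW_t$-term, handling the chain rule through $f$ and the standard normal density $\varphi$ correctly, including the sign flip for the block $\{Z_T \ge c_2\}$ where $\Phi(-f(c_2,t))$ appears. The boundary cases $c_1 = 0$ and $c_2 = \infty$ are covered automatically, since then the corresponding $\Phi$ and $\varphi$ terms vanish in the limit. Finally, linearity of the self-financing wealth equation lets me assemble the three replicating pairs into the stated $(X_2^*(t),\pi_2^*(t))$.
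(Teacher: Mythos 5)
Your proposal is correct and follows essentially the same route as the paper: decompose $X_2^*$ into the three digital claims $Z_T^{-1}\mathbbm{1}_{\{Z_T\le c_1\}}$, $Z_T^{-1}\mathbbm{1}_{\{c_1<Z_T<c_2\}}$, $Z_T^{-1}\mathbbm{1}_{\{Z_T\ge c_2\}}$, replicate each via Lemma~\ref{lem:replicating}, and assemble by linearity of the wealth process. The additional martingale-representation sketch you give is just an unpacking of that lemma and does not change the argument.
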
}
\textcolor{black}{\begin{proof}
    If we write $X_2^*$ from Theorem~\ref{thm:nash_log2} c) for $A_2=\{c_1 < Z_T < c_2\}$ as 
    $$ X_2^* = \frac{x_0^2 - \alpha_2 \beta_1 x_0^1}{1-\alpha_2} Z_T^{-1}\left(\mathbbm{1}\{Z_T \leq c_1\} + \mathbbm{1}\{Z_T \geq c_2\} \right)  + \beta_1 x_0^1 Z_T^{-1} \mathbbm{1}\{c_1 < Z_T < c_2\},$$ the result follows from Lemma~\ref{lem:replicating} and the linearity of the wealth process. 
\end{proof}}

\subsection{Power utility}
We consider now the same problem with power utility for the agents. Note that we use the same parameter for the utility function for both agents. More precisely, we consider for $U(x)= \frac{1}{1-\gamma} x^{1-\gamma},\,  {\color{black}\gamma >0 ,\gamma \neq 1},$ the individual portfolio problems
$$(P_2) \left\{ \begin{array}{l}
\EE U(X_i) \to \max\\
\PP(X_i \ge \sum_{j\neq i} \beta_j X_j) \ge \alpha_i,\\
\EE[Z_TX_i]=x_0^i,\\
X_i \mbox{ is } \mathcal{F}-\mbox{measurable,}
\end{array}\right.$$
for $i=1,2$. For notational convenience, we abbreviate $\varepsilon_\gamma := \EE\big[Z_T^{1-1/\gamma} \big].$

As in the previous case of logarithmic utility, it is easy to see that for $x_0^1 \ge \beta_2 x_0^2 \ge \beta_1\beta_2 x_0^1$ the optimal terminal wealths without constraint constitute a Nash equilibrium, i.e. 
$$ X_1^* =  \frac{x_0^1}{\varepsilon_\gamma} Z_T^{-1/\gamma}, \mbox{   and   } X_2^* = \frac{x_0^2}{\varepsilon_\gamma}  Z_T^{-1/\gamma},$$
since $I(x):= x^{-1/\gamma}$ is the inverse function of $U'$ \citep[see e.g.][]{korn2013optionsbewertung}.

We concentrate now on the interesting case which is determined by the parameters $\beta_1\beta_2<1$ and {\color{black} for $\gamma \in(0,1)$} by
\begin{equation}
  \beta_1 x_0^1 > x_0^2\ge \frac{\beta_1 x_0^1}{\varepsilon_\gamma}  \int_{Z_T\ge z_{1-\alpha_2}} Z_T^{1-1/\gamma}d\PP,  \label{eq:case2}
\end{equation}
and  {\color{black} for $\gamma >1$ by 
\begin{equation}
  \beta_1 x_0^1 > x_0^2\ge \frac{\beta_1 x_0^1}{\varepsilon_\gamma}  \int_{Z_T\le z_{\alpha_2}} Z_T^{1-1/\gamma}d\PP,  \label{eq:case3}
\end{equation}
}
where $z_{\alpha}$ is the $\alpha-$quantile of $Z_T$, i.e.  $ \PP(Z_T \le z_\alpha)=\alpha$. Since $Z_T>0$ we necessarily have that $z_\alpha> 0.$ 

\begin{theorem}[Power utility]\label{lem:power}
     Let $\beta_1\beta_2<1$.
\begin{itemize}
    \item[a)] If $\gamma \in(0,1)$  and the inequality in \eqref{eq:case2} holds true,  the unique Nash equilibrium is given by
     \begin{equation}\label{eq:NEpower2} 
X_1^* = \frac{x_0^1}{\varepsilon_\gamma} Z_T^{-1/\gamma}, \mbox{   and   } X_2^* = \left\{ \begin{array}{cr}  (\lambda_2Z_T)^{-1/\gamma},& Z_T < z_{1-\alpha_2},\\ \beta_1 \frac{x_0^1}{\varepsilon_\gamma} Z_T^{-1/\gamma},& Z_T \ge  z_{1-\alpha_2}, \end{array}\right. \end{equation} 
     where  $\lambda_2>0$ is such that $\EE[Z_T X_2^*]=x_0^2$.
    \item[b)] {\color{black} If $\gamma>1$  and the inequality in \eqref{eq:case3} holds true,  the unique Nash equilibrium is given by
     \begin{equation}\label{eq:NEpower3} 
X_1^* = \frac{x_0^1}{\varepsilon_\gamma} Z_T^{-1/\gamma}, \mbox{   and   } X_2^* = \left\{ \begin{array}{cr}  (\lambda_2Z_T)^{-1/\gamma},& Z_T > z_{\alpha_2},\\ \beta_1 \frac{x_0^1}{\varepsilon_\gamma} Z_T^{-1/\gamma},& Z_T \le   z_{\alpha_2}, \end{array}\right. \end{equation} 
     where  $\lambda_2>0$ is such that $\EE[Z_T X_2^*]=x_0^2$.}
\end{itemize}     
    
\end{theorem}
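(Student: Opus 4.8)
The plan is to solve the two best-response problems in $(P_2)$ and then glue them into a fixed point, following the same route as for the logarithmic utility but with one extra ingredient: for power utility the position of the ``insurance set'' is no longer free. I would begin by conjecturing that agent~$1$ uses the unconstrained optimum $X_1^*=\frac{x_0^1}{\varepsilon_\gamma}Z_T^{-1/\gamma}$ and reducing everything to agent~$2$'s best response against this $X_1^*$. Her constraint then becomes $\PP(X_2\ge Y)\ge\alpha_2$ with benchmark $Y:=\beta_1X_1^*=\beta_1\frac{x_0^1}{\varepsilon_\gamma}Z_T^{-1/\gamma}$. Since $I(y)=y^{-1/\gamma}$ is the inverse of $U'$, the same Lagrangian argument that underlies Lemma~\ref{lem:bound2}, now with $I$ instead of the logarithmic inverse, shows that an optimal response has the form $X_2^*=\max\{Y,(\lambda_2Z_T)^{-1/\gamma}\}$ on a set of probability $\alpha_2$ and $(\lambda_2Z_T)^{-1/\gamma}$ off it, with $\lambda_2>0$ fixed by $\EE[Z_TX_2^*]=x_0^2$.

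The decisive feature is that both $Y$ and the free optimum $(\lambda_2Z_T)^{-1/\gamma}$ are proportional to $Z_T^{-1/\gamma}$, so their comparison is state-independent. Set $K:=\beta_1x_0^1/\varepsilon_\gamma$ and, for a candidate insurance set $A_2$, write $m:=\int_{A_2}Z_T^{1-1/\gamma}\dx\PP$; then the free coefficient equals $\lambda_2^{-1/\gamma}=(x_0^2-Km)/(\varepsilon_\gamma-m)$, and the standing assumption $\beta_1x_0^1>x_0^2$ is equivalent to $\lambda_2^{-1/\gamma}<K$. Hence $Y>(\lambda_2Z_T)^{-1/\gamma}$ pointwise, the probability constraint binds, and the best response collapses to $X_2^*=Y\,\1_{A_2}+(\lambda_2Z_T)^{-1/\gamma}\,\1_{A_2^c}$ with $\PP(A_2)=\alpha_2$. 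It then only remains to locate $A_2$.

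This is the heart of the proof and the main obstacle, since the probability constraint is non-convex and, in the logarithmic case, the location of $A_2$ was immaterial. Inserting the value $W(R,c)=\frac{1}{1-\gamma}c^{1-\gamma}\big(\int_RZ_T^{1-1/\gamma}\dx\PP\big)^{\gamma}$ of an unconstrained power investor with budget $c$ on a region $R$, one checks that the expected utility depends on $A_2$ only through $m$:
\[
J(A_2)=\frac{1}{1-\gamma}\Big(K^{1-\gamma}m+(x_0^2-Km)^{1-\gamma}(\varepsilon_\gamma-m)^{\gamma}\Big).
\]
I would then show that $J$ is strictly decreasing in $m$; a short computation reduces the sign of $J'(m)$, via $\lambda_2^{-1/\gamma}=(x_0^2-Km)/(\varepsilon_\gamma-m)$, to the single inequality $\lambda_2^{-1/\gamma}<K$, which holds in both regimes. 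Maximizing $J$ therefore amounts to minimizing $m=\int_{A_2}Z_T^{1-1/\gamma}\dx\PP$ subject to $\PP(A_2)=\alpha_2$. Since $Z_T$ has a continuous law, this is uniquely achieved (up to null sets) by the level set on which $Z_T^{1-1/\gamma}$ is smallest: for $\gamma\in(0,1)$ the exponent $1-1/\gamma$ is negative, so the smallest values occur for the largest $Z_T$, giving $A_2=\{Z_T\ge z_{1-\alpha_2}\}$; for $\gamma>1$ the exponent is positive, giving $A_2=\{Z_T\le z_{\alpha_2}\}$. This produces the two forms in \eqref{eq:NEpower2} and \eqref{eq:NEpower3} and, through the strict monotonicity, their uniqueness.

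Finally I would verify that the conjecture is self-consistent and that the regime conditions enter correctly. Using $\lambda_2^{-1/\gamma}<K$ and $\beta_1\beta_2<1$, one gets on $A_2$ that $\beta_2X_2^*=\beta_1\beta_2X_1^*<X_1^*$, and on $A_2^c$ that $\beta_2X_2^*=\beta_2\lambda_2^{-1/\gamma}Z_T^{-1/\gamma}<\beta_2\beta_1\frac{x_0^1}{\varepsilon_\gamma}Z_T^{-1/\gamma}\le X_1^*$; thus $\PP(X_1^*\ge\beta_2X_2^*)=1$, agent~$1$'s constraint is slack, and the unconstrained $X_1^*$ is genuinely her best response, which closes the fixed point. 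The inequalities \eqref{eq:case2} and \eqref{eq:case3} are exactly the requirement that the minimal committed budget $\frac{\beta_1x_0^1}{\varepsilon_\gamma}\int_{A_2}Z_T^{1-1/\gamma}\dx\PP$ at the optimal $A_2$ does not exceed $x_0^2$, which guarantees $\lambda_2>0$; together with $\beta_1x_0^1>x_0^2$ they delimit precisely the non-trivial regime. The only genuinely technical points are the monotonicity of $J$ in $m$ and the level-set rearrangement, both of which are elementary one-variable estimates once the reduction to $m$ is in place.
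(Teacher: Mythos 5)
Your argument is essentially correct, but it takes a genuinely different route at the decisive step, namely locating the insurance set. The paper first derives the necessary form of \emph{both} agents' strategies from Lemma~\ref{lem:bound}, eliminates all but the case $\lambda_1\ge\beta_2\lambda_2$ by a case distinction on the budget equations, and then pins down the location of $A_2$ by a Lagrangian verification: it introduces the functional $L(X,\lambda_2,\eta_2)=U(X)-\lambda_2Z_TX+\eta_2\1\{X\ge\beta_1I(\kappa Z_T)\}$ with an explicitly chosen multiplier $\eta_2\ge 0$ for the indicator constraint and shows in Appendix~\ref{sec:A2} that $X_2^*$ maximizes $L$ pointwise, the sign analysis of the comparison function $F$ being what forces the insured region to be $\{Z_T\ge z_{1-\alpha_2}\}$ (resp.\ $\{Z_T\le z_{\alpha_2}\}$). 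You instead apply Lemma~\ref{lem:bound} for each fixed candidate set $A_2$ (this is the correct reference, rather than Lemma~\ref{lem:bound2}, whose proof is a log-specific Schur-concavity argument), observe that the benchmark dominates the Merton profile state-independently because both are multiples of $Z_T^{-1/\gamma}$, and reduce the outer optimization over $A_2$ to a one-dimensional problem in $m=\int_{A_2}Z_T^{1-1/\gamma}\,d\PP$. Your monotonicity computation is correct: writing $c=\lambda_2^{-1/\gamma}=(x_0^2-Km)/(\varepsilon_\gamma-m)$ one gets $(1-\gamma)J'(m)=K^{1-\gamma}-(1-\gamma)Kc^{-\gamma}-\gamma c^{1-\gamma}$, which vanishes at $K=c$ and has the right sign for $K>c$ in both regimes, so $J'(m)<0$ and the bathtub principle selects the level set where $Z_T^{1-1/\gamma}$ is smallest. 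This is arguably more transparent than the paper's verification argument: it explains in one line why the insured set flips between $\gamma<1$ and $\gamma>1$ (the sign of the exponent $1-1/\gamma$), whereas the paper's route has the advantage of being a standard duality template that does not rely on the special homogeneity of power utility.

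One genuine gap remains: uniqueness of the Nash equilibrium. You establish that the stated pair is an equilibrium and that agent~2's best response to the \emph{stated} $X_1^*$ is unique, but you start by conjecturing $X_1^*=\frac{x_0^1}{\varepsilon_\gamma}Z_T^{-1/\gamma}$ and only verify self-consistency at the end; this does not exclude equilibria in which agent~1 plays something else. To close this you need the necessity step the paper performs first: by Lemma~\ref{lem:bound}, in any equilibrium both wealths have the form $Z_T^{-1/\gamma}\big(\lambda_i\1_{A_i^c}+\max\{\beta_j\lambda_j,\lambda_i\}\1_{A_i}\big)$, and a case distinction on where the maxima are attained, combined with the budget identities and $x_0^1\ge\beta_2 x_0^2$, forces $\lambda_1\ge\beta_2\lambda_2$ and hence $X_1^*=x_0^1\varepsilon_\gamma^{-1}Z_T^{-1/\gamma}$. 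Your state-independent coefficient comparison makes this case distinction easy, so the repair fits naturally into your framework, but as written the uniqueness assertion is not proved. A second, minor point: in the boundary case of equality in \eqref{eq:case2} one gets $x_0^2-Km^*=0$ and hence $\lambda_2^{-1/\gamma}=0$, so the strict positivity of $\lambda_2^{-1/\gamma}$ (and the feasibility of the value $J$ for $\gamma>1$, where $U(0)=-\infty$) deserves a sentence.
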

\begin{proof} We restrict the proof to case a). Case b) is similar.
Using Lemma \ref{lem:bound} and rearranging the terms, we obtain that a Nash equilibrium is necessarily of the form
 \begin{align*}
  X_1^* &=  {Z_T}^{-1/\gamma} \Big( \lambda_1  \1_{A_1^c}+ \max\{\beta_2\lambda_2,\lambda_1\} \1_{A_1}\Big),\\
         X_2^* &=  {Z_T}^{-1/\gamma} \Big( \lambda_2  \1_{A_2^c}+ \max\{\beta_1\lambda_1,\lambda_2\} \1_{A_2}\Big)
     \end{align*}
     for some $\lambda_1,\lambda_2 >0$ and measurable sets $A_1,A_2$ with probability $\PP(A_i)=\alpha_1, i=1,2.$
Distinguishing the cases where the maximum is attained, it is possible to see that only the case $\lambda_1 \ge \beta_2\lambda_2$ yields a solution. Moreover, it follows that
$$X_1^* = \frac{x_0^1}{\varepsilon_\gamma} Z_T^{-1/\gamma}.$$
In order to determine the precise form of $X_2^*$,  we will proceed in a different way as in the logarithmic case. This is necessary, because in the power setting it will turn out that it really matters where the region is located precisely where the probability constraint is satisfied. This does not follow from Lemma \ref{lem:bound}. \\
We write $\kappa := U'(x_0^1/\varepsilon_\gamma)$, i.e. $I(\kappa) = x_0^1/\varepsilon_\gamma.$ It follows that $I(\lambda_2) < \beta_1 I(\kappa)$, because otherwise the price of $X_2^*$ would be larger or equal than $\beta_1 x_0^1$ which contradicts our assumption. This implies $\lambda_2 > \kappa \beta_1^{-\gamma}$. 
It remains to prove that $X_2^*$ is the best response to $X_1^*$. Thus, we consider the following function
     $$ L(X,\lambda_2,\eta_2) := U(X) -\lambda_2 Z_T X +\eta_2 \1\{X\ge \beta_1 I(\kappa Z_T)\},$$
     where
     $$ \eta_2 := U(I(\lambda_2z_{1-\alpha_2}))- U(\beta_1I(\kappa z_{1-\alpha_2}))+ \lambda_2 z_{1-\alpha_2} \Big( \beta_1I(\kappa z_{1-\alpha_2})- I( \lambda_2 z_{1-\alpha_2})\Big) $$
and $\kappa,\lambda_2$ as before. Note that $\eta_2\ge 0$ since
\begin{align*}
    \eta_2\ge 0 & \Leftrightarrow U(I(\lambda_2z_{1-\alpha_2}))- U(\beta_1I(\kappa z_{1-\alpha_2})) \ge \lambda_2 z_{1-\alpha_2} \Big( I( \lambda_2 z_{1-\alpha_2})- \beta_1I(\kappa z_{1-\alpha_2}) \Big)\\
    &  \Leftrightarrow \frac{U(I(\lambda_2z_{1-\alpha_2}))- U(\beta_1I(\kappa z_{1-\alpha_2}))}{I( \lambda_2 z_{1-\alpha_2})- \beta_1I(\kappa z_{1-\alpha_2}) } \le \lambda_2 z_{1-\alpha_2}\\ &  \Leftrightarrow  U'(\xi) \le \lambda_2 z_{1-\alpha_2}
\end{align*}
for some $\xi \in (I(\lambda_2z_{1-\alpha_2}), \beta_1 I(\kappa z_{1-\alpha_2})).$ Because $U'$ is non-increasing, we obtain $ U'(\xi) \le U'(I(\lambda_2z_{1-\alpha_2}))=\lambda_2z_{1-\alpha_2}.$
Now we can show that for any other admissible terminal wealth $X_2$ we obtain
\begin{align*}
    \EE U(X_2^*) - \EE U(X_2) \geq \EE L(X_2^*,\lambda_2,\eta_2)- \EE L(X_2,\lambda_2,\eta_2)\ge 0,
\end{align*}
since $X_2^*$ maximizes $X\mapsto L(X,\lambda_2,\eta_2)$ pointwise (see App. \ref{sec:A2}) which implies that $X_2^*$ maximizes the objective of agent 2 under the VaR-based constraint. Now it only remains to discuss the existence of $\lambda_2>0$ such that $\E[Z_T X_2^*] = x_0^2$. We can solve
$$\E[Z_T X_2^*] = \lambda_2^{-1/\gamma} \E\Big[Z_T^{1-1/\gamma} \1 \{Z_T< z_{1-\alpha_2}\} \Big] + \frac{\beta_1 x_0^1}{\varepsilon_\gamma} \E\Big[Z_T ^{1-1/\gamma} \1 \{Z_T \geq z_{1-\alpha_2}\} \Big] = x_0^2$$ 
for $\lambda_2$ to obtain 
$$ \lambda_2 = \E\Big[Z_T^{1-1/\gamma} \1 \{Z_T< z_{1-\alpha_2}\} \Big]^{\gamma} \bigg(x_0^2 -  \frac{\beta_1 x_0^1}{\varepsilon_\gamma} \E\Big[Z_T ^{1-1/\gamma} \1 \{Z_T \geq z_{1-\alpha_2}\} \Big]\bigg)^{-\gamma}.$$
Since \eqref{eq:case2} holds, we have $\lambda_2 > 0$ which concludes the proof.
\end{proof}

\begin{remark}
\begin{enumerate}
    \item[a)] Note that for power utility, in contrast to logarithmic utility, the position of the set $A$ that guarantees that the probability constraint is satisfied is not arbitrary. This is remarkable since the limit $\gamma \to 1$ results in a logarithmic utility function. {\color{black} Moreover, it turns out that for $\gamma >1$ the constraint is satisfied on the set $\{Z_T \leq z_{\alpha_2}\}$ whereas for $\gamma\in(0,1)$, the constraint is satisfied on the set $\{Z_T > z_{1-\alpha_2}\}$. In both cases the constraint is satisfied on the side where it is cheaper to achieve. This is the main reason for the non-continuous behavior of the optimal terminal wealth as a function of $\gamma.$ It cannot  be explained by the risk aversion.  The relative risk aversion of the power utility is given by $-x\frac{U''(x)}{U'(x)}=\gamma$ and thus risk aversion is increasing in $\gamma$. When we consider as an example again the Black Scholes market, it holds that large $Z_T$ correspond to small stock prices and vice versa.  This means that the less risk-averse investor (less risk averse than log-utility) tries to outperform the other agent  in the event of small stock prices, whereas the more risk-averse investor (more risk averse than log-utility) tries to outperform the other agent in the event of large stock prices.}
     \item[b)] \textcolor{black}{It is possible to derive the replicating strategies for the Nash equilibrium from Theorem~\ref{lem:power} using Theorem~E.1 from \cite{jin2008behavioral}. However, the resulting expressions become even more complex than the ones obtained for logarithmic utility which is why we refrain from presenting them here.}
\end{enumerate}
    
\end{remark}


\section{Multi agent case}\label{sec:multi}
Here we consider problem $(P_2)$ in the case of $n\ge 3$ agents, i.e. for $i=1,\ldots,n$ we look at
$$(P_n) \left\{ \begin{array}{l}
\EE \ln (X_i) \to \max\\
\PP(X_i \ge \sum_{j\neq i} \beta_{{\color{black}ij}} X_j) \ge \alpha_i,\\
\EE[Z_TX_i]=x_0^i,\\
X_i \mbox{ is } \mathcal{F}-\mbox{measurable,}
\end{array}\right.$$
with $\alpha_i,\beta_{{\color{black}ij}}\in [0,1]$.  Further, we assume w.l.o.g.\ that the initial capitals of the agents are ordered by $x_0^1\ge \ldots \ge x_0^n.$ In particular, we restrict the discussion to the logarithmic utility. Moreover, we assume that for all $i$ we have $\sum_{j\neq i} \beta_{{\color{black}ij}} \le 1.$ We are again looking for a Nash equilibrium of investment strategies. The definition for $n$ agents is as follows:

\begin{definition}\label{definition nash2}
Let $J_i(X_1,\ldots,X_n)= \EE \ln (X_i)$ be the objective function of agent $i$ given $(X_1,\ldots ,X_n)$ satisfy the constraints in $(P_n)$. A feasible vector $(X_1^*,\ldots,X_n^*)$ of terminal wealths is called a \emph{Nash equilibrium}, if, for all admissible random vectors $(X_1^*,\ldots,X_{i-1}^*,X_i,X_{i+1}^*,\ldots,X_n^*)$ on the right-hand side: 
\begin{equation}
J_i(X_1^*,\ldots,X_n^*)\geq J_i(X_1^*,\ldots,X_{i-1}^*,X_i,X_{i+1}^*,\ldots,X_n^*) \label{Nash condition2}
\end{equation}
for all agents $i.$
\end{definition}

We distinguish now the following cases:

\subsection{Assume that the sum of the alphas is less or equal to 1}
In this case, the probability constraints of the agents can (and will) be satisfied on disjoint sets. 

\begin{theorem}\label{thm:nash_logn}
    If a Nash equilibrium exists, it is of the form
    \begin{equation}\label{eq:Xstardisjoint}
        X_i^* = \frac{1}{Z_T} \Big(\1_{A_i} \max\{x_0^i, \lambda_\beta^{-i}\}+\1_{A_i^c} \lambda_i \Big)
    \end{equation} 
    with $\lambda_\beta^{-i}:= \sum_{j\neq i} \beta_{{\color{black}ij}} \lambda_j$, $\PP(A_i)=\alpha_i$ and $A_i\cap A_j=\emptyset$ for $i\neq j.$ Moreover, we have that \begin{equation}\label{eq:lambda}
        \lambda_i = \frac{1}{1-\alpha_i} \Big(x_0^i -\alpha_i \max\{x_0^i, \lambda_\beta^{-i}\} \Big)
    \end{equation} 
     and $0<\lambda_i\le x_0^i$, $\alpha_i \lambda_\beta^{-i} \le x_0^i.$
\end{theorem}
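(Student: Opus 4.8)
The statement asserts necessity, so the plan is to assume that $(X_1^*,\ldots,X_n^*)$ is a Nash equilibrium and derive the representation \eqref{eq:Xstardisjoint}--\eqref{eq:lambda}. I would proceed agent by agent through the best-response problem, then use $\sum_k \alpha_k \le 1$ to force the constraint sets to be disjoint, and finally read off the constants from the self-financing conditions.

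First I would fix $i$ and treat $(X_j^*)_{j\ne i}$ as given, so that agent $i$ solves the single-agent log-utility problem $(P_n)$ with stochastic benchmark $Y_i:=\sum_{j\ne i}\beta_{ij}X_j^*$. By Lemma~\ref{lem:bound2} any best response is of the form
$$X_i^*=\max\Big\{\1_{A_i}Y_i,\ \frac{\lambda_i}{Z_T}\Big\}=\1_{A_i}\max\Big\{Y_i,\frac{\lambda_i}{Z_T}\Big\}+\1_{A_i^c}\frac{\lambda_i}{Z_T}$$
for some $\lambda_i>0$ and some $A_i$ with $\PP(A_i)=\alpha_i$, together with the budget identity $\EE[Z_T X_i^*]=x_0^i$; positivity of $X_i^*$ on $A_i^c$ forces $\lambda_i>0$. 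The key qualitative input of Lemma~\ref{lem:bound2} is that $A_i$ must be placed where meeting the constraint is cheapest, i.e.\ on a sublevel set of $\omega\mapsto Z_T Y_i$.

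The crux is disjointness. Writing each competitor's wealth as above gives $Z_T X_j^*=\max\{Z_T Y_j,\lambda_j\}\ge\lambda_j$ on $A_j$ and $Z_T X_j^*=\lambda_j$ on $A_j^c$, so $Z_T X_j^*$ is smallest exactly off $A_j$. Consequently $Z_T Y_i=\sum_{j\ne i}\beta_{ij}Z_T X_j^*$ attains its global minimum, the constant $\lambda_\beta^{-i}=\sum_{j\ne i}\beta_{ij}\lambda_j$, precisely on $\bigcap_{j\ne i}A_j^c$, and is strictly larger wherever some competitor with $\beta_{ij}>0$ strictly boosts. Because the union bound and $\sum_k\alpha_k\le 1$ yield $\PP\big(\bigcap_{j\ne i}A_j^c\big)\ge 1-\sum_{j\ne i}\alpha_j\ge\alpha_i$, there is always enough mass in the cheapest region to host $A_i$ (here I use that $\Omega$ is continuous); the cheapest-location property of Lemma~\ref{lem:bound2} then forces $A_i\subseteq\bigcap_{j\ne i}A_j^c$, i.e.\ $A_i\cap A_j=\emptyset$. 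I expect the genuine difficulty here to be that, for logarithmic utility, the best-response value depends only on $\PP(A_i)$ and not on the exact location of $A_i$, so disjointness need not hold literally; I would resolve this by observing that any overlap occurs only on points where the competing agents do not strictly boost (otherwise agent $i$ could strictly lower her cost by relocating $A_i$, contradicting optimality), and that on such points $X_j^*=\lambda_j/Z_T$ anyway, so after discarding them from the sets one obtains disjoint $A_i$ giving exactly the same terminal wealths.

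It then remains to identify the constants. On the disjoint set $A_i$ every competitor is off its own constraint set, hence $X_j^*=\lambda_j/Z_T$ and $Y_i=\lambda_\beta^{-i}/Z_T$ is constant there; substituting into the best-response expression shows that $X_i^*$ equals $c_i/Z_T$ on $A_i$ and $\lambda_i/Z_T$ on $A_i^c$, with $c_i=\max\{\lambda_\beta^{-i},\lambda_i\}$. Treating $c_i=\max\{\lambda_\beta^{-i},\lambda_i\}$ and the self-financing condition $\alpha_i c_i+(1-\alpha_i)\lambda_i=x_0^i$ as two equations and distinguishing $\lambda_i\ge\lambda_\beta^{-i}$ from $\lambda_i<\lambda_\beta^{-i}$, I would check that the first case forces $\lambda_i=x_0^i$ with $x_0^i\ge\lambda_\beta^{-i}$ and the second forces $\lambda_i=(x_0^i-\alpha_i\lambda_\beta^{-i})/(1-\alpha_i)$ with $x_0^i<\lambda_\beta^{-i}$; in both cases $c_i=\max\{x_0^i,\lambda_\beta^{-i}\}$, which is precisely \eqref{eq:Xstardisjoint}, and solving for $\lambda_i$ gives \eqref{eq:lambda}. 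The bounds are then immediate from \eqref{eq:lambda}: the inequality $\lambda_i\le x_0^i$ is equivalent to $\max\{x_0^i,\lambda_\beta^{-i}\}\ge x_0^i$, while $\lambda_i>0$ yields $x_0^i>\alpha_i\max\{x_0^i,\lambda_\beta^{-i}\}\ge\alpha_i\lambda_\beta^{-i}$, hence $\alpha_i\lambda_\beta^{-i}\le x_0^i$.
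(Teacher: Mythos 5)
Your proposal is correct and follows essentially the same route as the paper: Lemma~\ref{lem:bound2} gives the best-response form, the observation that $Z_T X_\beta^{-i}$ is minimized off $\bigcup_{j\neq i}A_j$ together with $\sum_k\alpha_k\le 1$ forces disjointness, and the self-financing condition then yields \eqref{eq:lambda} and the bounds. Your extra care about overlaps on which no competitor strictly boosts is a sensible refinement of a point the paper passes over, but it does not change the argument.
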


\begin{proof}
In order to determine a Nash equilibrium, we have to solve the best response problem $(P_n)$ for an arbitrary  agent $i$. Suppose that $X_j, j\neq i,$ are arbitrary given wealths of agents $j$. We can reformulate problem $(P_n)$ as follows:
$$(P_n) \left\{ \begin{array}{l}
\EE \ln (X_i) \to \max\\
X_i \ge \1_{A_i} \sum_{j\neq i} \beta_{{\color{black}ij}} X_j,\\
\EE[Z_TX_i]=x_0^i,\\
X_i \mbox{ is } \mathcal{F}-\mbox{measurable,}\\
A_i \in \mathcal{F}, \PP(A_i)=\alpha_i.
\end{array}\right.$$
Note that the optimization is over $X_i$ and the set $A_i$ here. According to Lemma \ref{lem:bound}, an optimal solution is of the form
\begin{eqnarray*}
    X_i &= \max\left\{\1_{A_i} X_\beta^{-i}, \frac{\lambda_i}{Z_T}  \right\} =
    \1_{A_i} \max\left\{ X_\beta^{-i}, \frac{\lambda_i}{Z_T}  \right\} + \1_{A_i^c} \frac{\lambda_i}{Z_T},
\end{eqnarray*}
where $X_\beta^{-i}:= \sum_{j\neq i} \beta_{{\color{black}ij}} X_j$. The maximum construction and the self-financing constraint yield $X_i\ge \lambda_i/Z_T$ and $\lambda_i\le x_0^i.$ Thus, we obtain that the minimal value of $Z_T X_\beta^{-i}$ is attained on the set $ (\cup_{j\neq i} A_j)^c.$ Taking Lemma \ref{lem:bound2} into account, in order to maximize $\EE \ln (X_i)$ we have to choose $A_i$ such that $A_i\cap A_j=\emptyset$ for $i\neq j.$ Due to the assumption $\alpha_1+\ldots +\alpha_n\le 1$, this is possible and the sets $A_i$ are disjoint in a Nash equilibrium.
In particular we obtain
\begin{eqnarray*}
    X_1 & =& \1_{A_1} \max\left\{ X_\beta^{-1}, \frac{\lambda_1}{Z_T}  \right\} + \1_{A_1^c} \frac{\lambda_1}{Z_T},\\
    X_2 & =& \1_{A_2} \max\bigg\{ \beta_{{\color{black}21}} X_1+ \sum_{j\ge 3} \beta_{{\color{black}2j}} X_j, \frac{\lambda_2}{Z_T}  \bigg\} + \1_{A_2^c} \frac{\lambda_2}{Z_T}.
\end{eqnarray*}
Plugging in $X_1$ into $X_2$ yields (note that $A_1$ and $A_2$ are disjoint)
\begin{equation}
     X_2 = \1_{A_2} \max\bigg\{ \frac{\beta_{{\color{black}21}} \lambda_1}{Z_T}+ \sum_{j\ge 3} \beta_{{\color{black}2j}} X_j, \frac{\lambda_2}{Z_T}  \bigg\} + \1_{A_2^c} \frac{\lambda_2}{Z_T}.
\end{equation}
Continuing this procedure we finally obtain:
\begin{equation}
     X_i = \frac{1}{Z_T} \Big( \1_{A_i} \max\left\{\lambda_\beta^{-i},\lambda_i \right\} + \1_{A_i^c} \lambda_i\Big) .
\end{equation}

If $\lambda_i \ge \lambda_\beta^{-i}$, then $X_i=\lambda_i/Z_T$ and by the financing condition $\lambda_i=x_0^i.$ Thus, in the maximum we can replace $\lambda_i$ by $x_0^i.$ Hence,  $X_i^*$ is as stated in \eqref{eq:Xstardisjoint}. The parameter $\lambda_i$ has then to be chosen such that the wealth can be financed with initial capital $x_0^i$. Hence
    $$ \EE [Z_T X_i^*] = \alpha_i\max\{x_0^i,\lambda_\beta^{-i} \} +(1-\alpha_i)  \lambda_i=x_0^i.$$ Solving this equation for $\lambda_i$ yields \eqref{eq:lambda}. In order to have a viable solution, we must have $\lambda_i\ge 0$ which yields the inequalities.

\end{proof}

\begin{remark}
\begin{itemize}
    \item[a)] Obviously, there can be infinitely many different Nash equilibria depending on where precisely the sets $A_i$ are located.
    \item[b)] Since $\lambda_i \le x_0^i$ and $\sum_{j\neq i} \beta_{{\color{black}ij}} \le 1$ we always have that $x_0^1 \ge \Bar{\lambda}_{-1}$ and thus $X_1^*=  \frac{x_0^1}{Z_T}$. Hence, the richest agent will never be influenced by the probability constraint.
\end{itemize}   
\end{remark}

\subsection{Assume that the sum of the alphas is larger than 1} In this case, the probability constraints of the agents obviously cannot be satisfied on disjoint sets. Inspired by the previous subsection we will determine only those Nash equilibria which are of the form
 \begin{align*}
        X_i &= \frac{1}{Z_T}\sum_{k=1}^m \lambda_{ki} \1_{B_k}
    \end{align*}
    for some sets $B_k$ and constants $\lambda_{ki}.$
    Further, we only consider the case where the probabilities $\alpha_i$ in the constraint satisfy $\alpha_i = \ell_i h$ with $h=1/m>0$ and $\ell_i\in\N.$ Note that this is always satisfied when $\alpha_i\in \mathbb{Q}$, hence not really restrictive.  Let $(B_i)_{i=1,\ldots,m}$ be a partition of $\Omega,$ i.e.\ $\cup_i B_i=\Omega$ and $B_i \cap B_j=\emptyset$ for $i\neq j$ and $\PP(B_i)=h.$ Consider the following deterministic optimization problems for $i=1,\ldots,n$:
$$(PD_i) \left\{ \begin{array}{l}
 \sum_{k=1}^m  \ln(\lambda_{ki}) \to \max\\
\lambda_{ki} \ge M_{ki} \sum_{j\neq i} \beta_{{\color{black}ij}} \lambda_{kj}, \;\mbox{ for all }k,\\
\sum_{k=1}^m \lambda_{ki}= m x_0^i, \\
\sum_{k=1}^m M_{ki} = \ell_i,\\
\lambda_{ki}>0, M_{ki}\in \{0,1\}, \;\mbox{ for all }k.
\end{array}\right.$$
It turns out that  $(PD_i)$ is the best response problem for agent $i$.

\begin{theorem}\label{thm:nash_log_n_groesser}
Suppose there exist $(\lambda_{kj})$ and $(M_{kj})$, $j=1,\ldots,n,\, k=1,\ldots,m,$ such that $(\lambda_{ki})$ and $(M_{ki})$, $k=1,\ldots,m,$ yield an optimal solution to $(PD_i)$ for each $i\in \{1,\ldots,n\}.$ Then the terminal wealths 
    \begin{align}\label{eq:Xwealth}
        X_i^* &= \frac{1}{Z_T}\sum_{k=1}^m \lambda_{ki} \1_{B_k}
    \end{align}
    constitute a Nash equilibrium for problem $(P_n).$
\end{theorem}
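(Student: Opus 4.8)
The plan is to verify Definition~\ref{definition nash2} directly: for each agent $i$, with the competitors' wealths frozen at $X_j^*$, $j\neq i$, I show that $X_i^*$ solves the best response problem $(P_n)$. First I would fix $i$ and compute the benchmark that agent $i$ competes against. Since each $X_j^* = \frac{1}{Z_T}\sum_k \lambda_{kj}\1_{B_k}$, the benchmark is
\[
\sum_{j\neq i}\beta_{ij} X_j^* = \frac{1}{Z_T}\sum_{k=1}^m \Big(\sum_{j\neq i}\beta_{ij}\lambda_{kj}\Big)\1_{B_k},
\]
so that $Z_T\sum_{j\neq i}\beta_{ij}X_j^*$ equals the constant $b_{ki}:=\sum_{j\neq i}\beta_{ij}\lambda_{kj}$ on each cell $B_k$. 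This cell-wise constancy of the state-price-weighted benchmark is the feature I would exploit throughout.

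Next I would invoke the single-agent best response result. Agent $i$'s problem is a log-utility maximization under the VaR-type constraint $\PP(X_i \ge \sum_{j\neq i}\beta_{ij}X_j^*)\ge\alpha_i$, which is exactly the setting of Lemma~\ref{lem:bound} and Lemma~\ref{lem:bound2}: an optimal wealth has the form $X_i = \1_{A_i}\max\{\sum_{j\neq i}\beta_{ij}X_j^*,\ \lambda_i/Z_T\} + \1_{A_i^c}\lambda_i/Z_T$ with $\PP(A_i)=\alpha_i$, and $A_i$ is placed where it is cheapest to enforce the constraint. The crucial step is to observe that the extra cost of enforcing the constraint on a piece $R\subseteq B_k$, namely $\EE[\1_R Z_T(\max\{b_{ki}/Z_T,\lambda_i/Z_T\}-\lambda_i/Z_T)]=(b_{ki}-\lambda_i)^+\PP(R)$, is proportional to $\PP(R)$ with a factor $(b_{ki}-\lambda_i)^+$ that is constant over $B_k$. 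Since neither cost nor utility distinguishes states within a cell, an optimal $A_i$ can be taken as a union of whole cells $B_k$ — here the assumption $\alpha_i=\ell_i h$ ensures that exactly $\ell_i$ cells make up a set of probability $\alpha_i$ — and consequently the best response is itself piecewise constant on the partition, of the form $\frac{1}{Z_T}\sum_k \mu_k\1_{B_k}$.

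Having reduced the best response to the piecewise-constant class, I would translate the three ingredients of $(P_n)$ into $(PD_i)$. Writing $X_i=\frac{1}{Z_T}\sum_k\mu_k\1_{B_k}$, the objective becomes $\EE\ln X_i = -\EE\ln Z_T + h\sum_k\ln\mu_k$, so maximizing expected log-utility is equivalent to maximizing $\sum_k\ln\mu_k$; the budget constraint $\EE[Z_T X_i]=x_0^i$ becomes $\sum_k\mu_k=m x_0^i$; and the probability constraint becomes the requirement that $\mu_k\ge b_{ki}$ on at least $\ell_i$ cells, encoded by $\mu_k\ge M_{ki}b_{ki}$ with $M_{ki}\in\{0,1\}$ and $\sum_k M_{ki}=\ell_i$. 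This is precisely $(PD_i)$. Since, by hypothesis, $(\lambda_{ki})$ and $(M_{ki})$ solve $(PD_i)$, the optimal piecewise-constant response equals $X_i^*$; combined with the previous paragraph (the overall optimum is attained within the piecewise-constant class), $X_i^*$ is a best response to $X_{-i}^*$. As this holds for every $i$, the vector $(X_1^*,\ldots,X_n^*)$ is a Nash equilibrium.

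I expect the main obstacle to be the middle step: rigorously justifying that the a priori infinite-dimensional best-response problem admits an optimizer that is constant on each cell $B_k$. This is where one must marry the explicit form supplied by Lemma~\ref{lem:bound2} with the observation that $Z_T$ times the benchmark is cell-wise constant, so that the optimal constraint region $A_i$ is a level set of $b_{ki}$ and hence respects the partition. Care is also needed with ties (several cells sharing the same marginal cost $(b_{ki}-\lambda_i)^+$), with the fact that $X_i^*$ may satisfy the constraint on more than the $\ell_i$ designated cells (harmless, as the constraint is an inequality), and with confirming that selecting $\ell_i$ whole cells realizes a probability-$\alpha_i$ set exactly — all of which are handled by the standing assumption $\alpha_i=\ell_i h$.
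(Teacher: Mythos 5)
Your proposal is correct and follows essentially the same route as the paper: fix the competitors' wealths, note that $Z_T$ times the benchmark is constant on each cell $B_k$, invoke Lemma~\ref{lem:bound2} to get a best response that is piecewise constant on the partition with the constraint enforced on the $\ell_i$ cheapest cells, and then identify the remaining finite-dimensional optimization with $(PD_i)$, whose optimality is the hypothesis. The paper's proof is terser (it compresses your cost argument and the tie/partition-respecting discussion into the single appeal to Lemma~\ref{lem:bound2} and Schur-concavity of $\sum_k\ln(x_k)$), but the substance is the same.
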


\begin{proof}
    Suppose the $X_i$ are as stated. When we fix $X_j, j\neq i,$ we have to show that $X_i$ is the best response, i.e.\ solves problem $(P_n).$ By Lemma \ref{lem:bound2}, we know that the best response is given by
    $$ X_i = \frac{1}{Z_T} \Big( \sum_{k\in S} \max\big\{ \sum_{j\neq i} \beta_{{\color{black}ij}} \lambda_{kj}, \lambda\big\}  \1_{B_k}+  \sum_{k\in S^c}  \lambda  \1_{B_k}\Big)$$
    where $S$ is a set of $\ell_i$ indices where $ \sum_{j\neq i} \beta_{{\color{black}ij}} \lambda_{kj}$ is smallest over all $k.$ Thus, we can write $X_i$ as in \eqref{eq:Xwealth} with $\lambda_{ki}$ either being $\sum_{j\neq i} \beta_{{\color{black}ij}} \lambda_{kj}$ or $\lambda.$ The self-financing condition can be stated as $\EE[Z_T X_i] = 1/m \sum_{k=1}^m \lambda_{ki}=  x_0^i.$ The probability constraint is satisfied when for $\ell_i$ indices $\lambda_{ki} \ge \sum_{j\neq i} \beta_{{\color{black}ij}} \lambda_{kj}.$ Since $\ln(x)$ is Schur-concave the optimal indices are automatically chosen by the optimization problem.
\end{proof}

\section{Numerical Examples}
Let us discuss the Nash equilibria from Theorems \ref{thm:nash_log2}, \ref{lem:power} and \ref{thm:nash_logn} by considering some numerical examples. To do this, we use a Black-Scholes financial market consisting of one stock with price process 
\begin{equation}
    \mathrm{d}S_t = S_t \big( \mu \mathrm{d}t + \sigma \mathrm{d}W_t\big),\, t\in [0,T],\quad S_0 = 1
\end{equation}
and a riskless bond with zero interest rate. We set the market parameters to $T=4$, $\mu = 0.03$, and $\sigma = 0.2$. In this case, the state price density $Z_T$ follows a lognormal distribution, i.e. 
\begin{equation}
    Z_T \sim \mathrm{LN}\Big(-\frac{\mu^2}{2\sigma^2}T, \frac{\mu^2}{\sigma^2}T\Big) =: \mathrm{LN}\big(\nu,\tau^2 \big)
\end{equation} 
with $\nu = -0.045$ and $\tau^2 = 0.09.$ {\color{black} It is important to note here that large $Z_T$ correspond to small stock prices and vice versa.}

\subsection{\textcolor{black}{Two agent case with logarithmic utility}}

First, we consider the 2-agent equilibrium under logarithmic utilities from Theorem \ref{thm:nash_log2}. We choose $x_0^1 = 3$ and $x_0^2 = 2$. Figure \ref{fig:nash_log2} shows the Nash equilibrium $(X_1^*, X_2^*)$ as a function of the state-price density for $\alpha_2 = 0.2$ and $\beta_1 = 0.9$, i.e. $\alpha_2 \beta_1 x_0^1 \leq x_0^2 < \beta_1 x_0^1$, so that the Nash equilibrium is given in the second part of Theorem \ref{thm:nash_log2} c). Note that $\alpha_1$ and $\beta_2$ have no influence on the Nash equilibrium. The set $A_2$ is chosen as $A_2 = \{Z_T \leq z_{\alpha_2}\}$ as discussed in Remark~\ref{rem:choice_A2}. The purple and green solid lines show the terminal wealth of agent 1 and 2 in the Nash equilibrium. The black dashed line shows for comparison the optimal terminal wealth of agent 2 in the standard Merton problem without the VaR-based constraint. The terminal wealth $X_1^*$ is continuous, strictly decreasing, and strictly convex in terms of $Z_T$ while $X_2^*$ shows a similar overall behavior with a discontinuity located at $z_{\alpha_2}$. We notice, that the terminal wealth $X_2^*$ in the Nash equilibrium is larger than the standard solution if $Z_T \leq z_{\alpha_2}$ and smaller for $Z_T > z_{\alpha_2}$. {\color{black} This means that the agent insures the constraint in scenarios with a bullish stock price evolution and underperforms in bearish markets. However, as discussed before, the insured scenarios can to some extend be arbitrarily defined in the case of an agent with log-utility, as long as the scenario set has the desired probability.  }

\begin{figure}
    \vspace*{\fill}
    \centering
    \includegraphics[width=\linewidth]{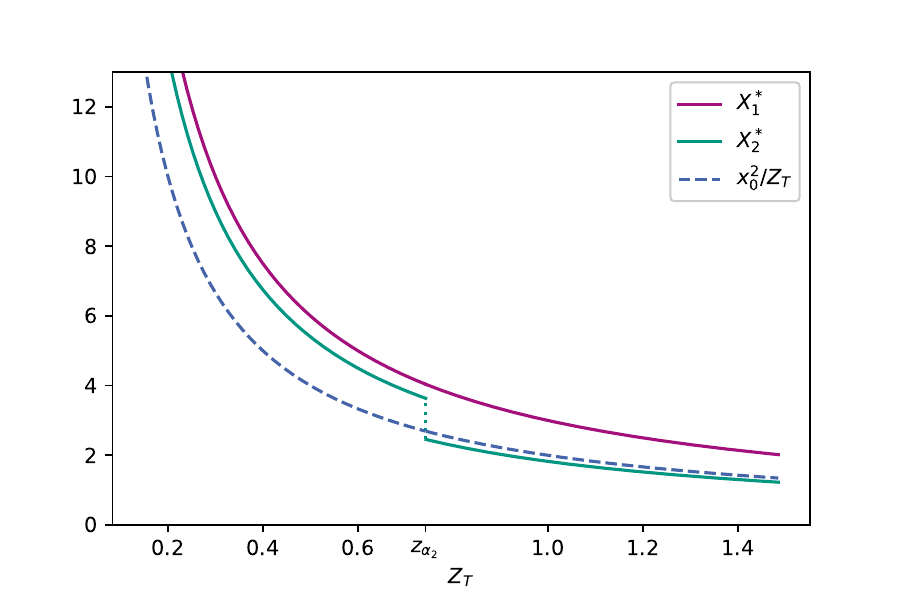}
    \caption{Nash equilibrium $(X_1^*, X_2^*)$ from Theorem \ref{thm:nash_log2} in terms of $Z_T$ for $\alpha_2 = 0.2$, $\beta_1 = 0.9$.}
    \label{fig:nash_log2}
\end{figure}

\begin{figure}
     \centering
     \vspace*{\fill}
     \begin{subfigure}[b]{0.468\textwidth}
         \centering
         \includegraphics[width=\textwidth]{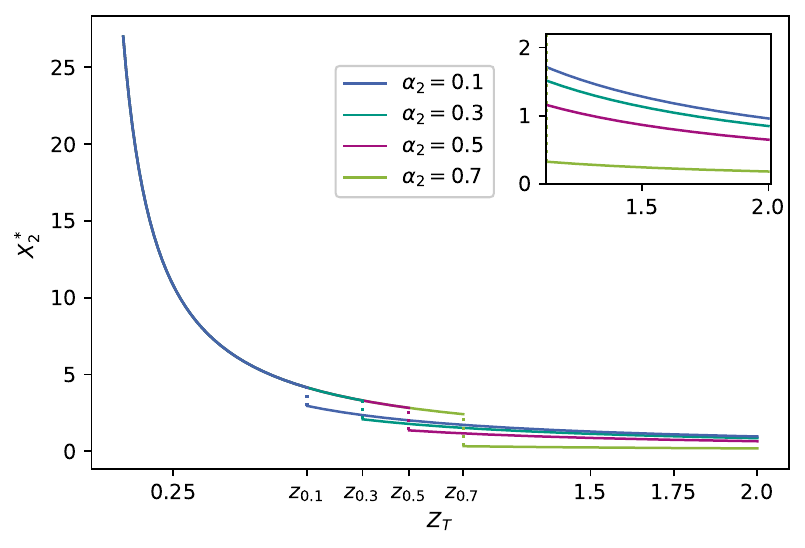}
         \caption{$\beta_1 = 0.9$ and $\alpha_2 \in \{0.1,0.3,0.5,0.7\}$}
    \label{fig:nash_log_influence_alpha}
     \end{subfigure}
     \hfill
     \begin{subfigure}[b]{0.48\textwidth}
         \centering
         \includegraphics[width=\textwidth]{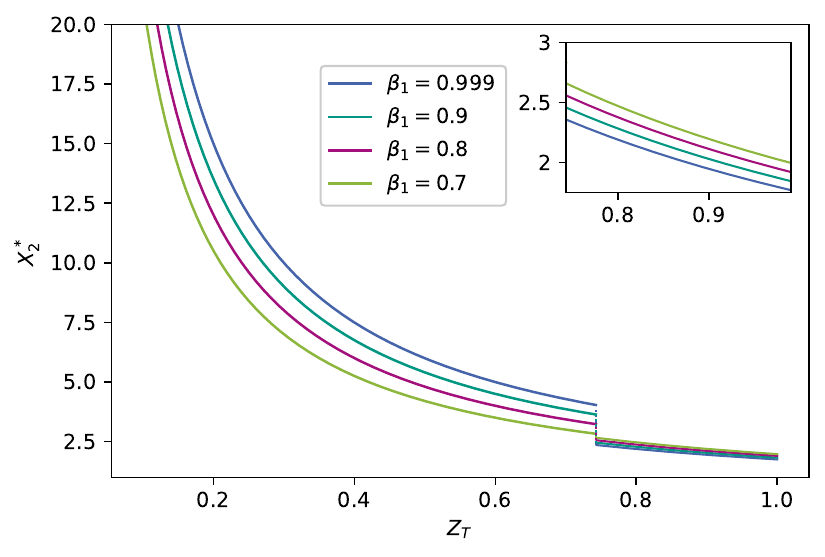}
         \caption{$\alpha_2 = 0.2$ and $\beta_1 \in \{0.7, 0.8, 0.9, 0.999\}$}
\label{fig:nash_log_influence_beta}
     \end{subfigure}
     \caption{Terminal wealth $X_2^*$ of agent 2 in the Nash equilibrium from Theorem \ref{thm:nash_log2} in terms of $Z_T$ for varying values of $\alpha_2$ and $\beta_1$.}
\end{figure}

Figure \ref{fig:nash_log_influence_alpha} illustrates the influence of the parameter $\alpha_2$ on the terminal wealth $X_2^*$ of agent 2 in the Nash equilibrium. As expected, the location of the discontinuity of $X_2^*$ is increasing in terms of $\alpha_2$ as it is simply given as the $\alpha_2$-quantile of $Z_T$. Moreover, we notice that for $Z_T > z_{\alpha_2}$ the value of $X_2^*$ is decreasing in $\alpha_2$. This results from the budget constraint $\E[Z_T X_2^*] = x_0^2$ since for larger $\alpha_2$, the value of $X_2^*$ is kept on the larger value $\beta_1 X_1^*$ on a larger interval. 


Figure \ref{fig:nash_log_influence_beta} shows the terminal wealth $X_2^*$ of agent 2 in the Nash equilibrium for different choices of the parameter $\beta_1$. We notice a change of order of the value of $X_2^*$ for the different choices of $\beta_1$ located at the discontinuity $z_{\alpha_2}.$ For $Z_T \leq z_{\alpha_2}$, the value of $X_2^*$ is largest for the largest choice of $\beta_1$ as the value of $X_2^*$ is kept at $\beta_1 X_1^*$ in this case. For $Z_T > z_{\alpha_2}$, the order of the values changes, i.e. the largest choice of $\beta_1$ yields the smallest value of $X_2^*$. This is again due to the budget constraint $\E[Z_T X_2^*] = x_0^2$.  {\color{black} It is interesting to see here that the set of insured scenarios is not influenced by $\beta_1.$ This parameter only has an influence on the severity of underperformance in non-insured states.   }


Next, let us consider some different possible choices for the set $A_2$. As discussed in Remark~\ref{rem:choice_A2}, there are infinitely many possible choices for $A_2$. Although choosing $A_2 = \{Z_T \leq z_{\alpha_2}\}$ maximizes the expected terminal wealth, it is worth considering different choices for $A_2$ and their influence on the terminal wealth $X_2^*$ of agent 2 in the Nash equilibrium. Figure \ref{fig:diff_sets} shows $X_2^*$ for different choices of $A_2$ in the form $A_2 = \{c_1 \leq Z_T \leq c_2\}$. Since $Z_T \sim \mathrm{LN}(\nu, \tau)$, for a fixed lower bound $c_1>0$, the upper bound $c_2>0$ of the interval is determined via 
\begin{equation}
    c_2 = \exp\bigg(\nu + \tau \cdot \Phi^{-1}\bigg(\alpha_2 + \Phi\bigg(\frac{\mathrm{ln}(c_1) - \nu}{\tau} \bigg) \bigg) \bigg). \label{eq:upper_bound}
\end{equation}
Note that the largest possible choice for the lower bound $c_1$ is the $(1-\alpha_2)$-quantile of $Z_T$, i.e. 
$$ z_{1-\alpha_2} = \exp\left(\nu + \tau \cdot \Phi^{-1}(1-\alpha_2)\right) \approx 1.2306.$$

We notice that the length of the interval differs significantly depending on whether the interval is located left, right, or around the mode of the distribution of $Z_T$.

\begin{figure}
    \begin{minipage}{\linewidth}
    \vspace{5cm}
     \centering
     \begin{subfigure}[b]{0.49\textwidth}
         \centering
         \includegraphics[width=\textwidth]{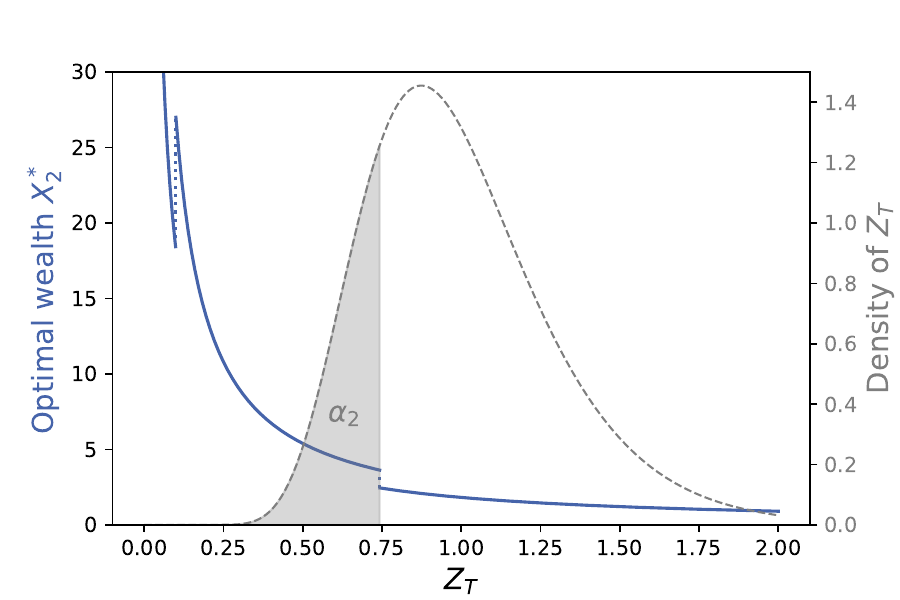}
         \caption{$A_2 = [0.1, 0.7427]$}
         \label{fig:y equals x}
     \end{subfigure}
     \hfill
     \begin{subfigure}[b]{0.49\textwidth}
         \centering
         \includegraphics[width=\textwidth]{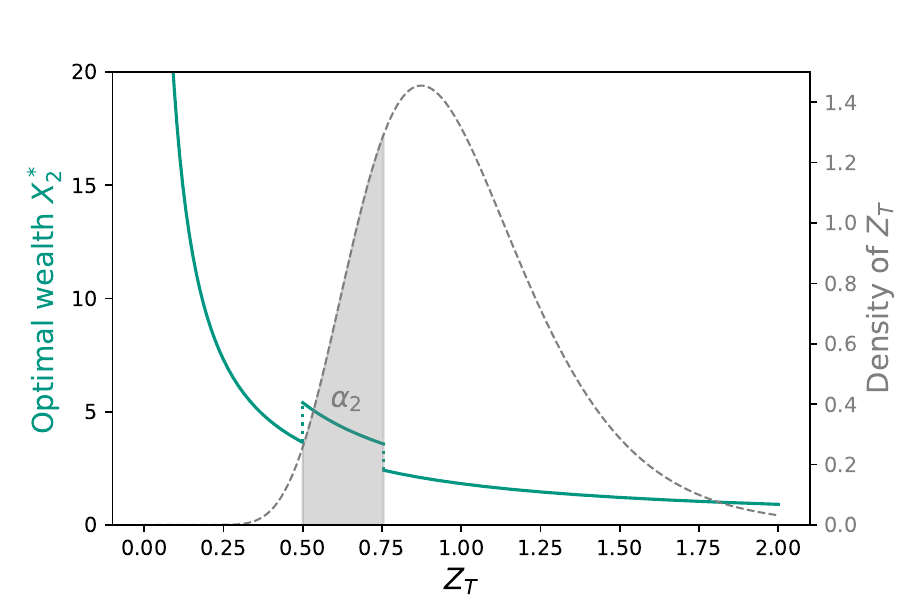}
         \caption{$A_2 = [0.5, 0.7547]$}
         \label{fig:three sin x}
     \end{subfigure}
     \\
     \begin{subfigure}[b]{0.49\textwidth}
         \centering
         \includegraphics[width=\textwidth]{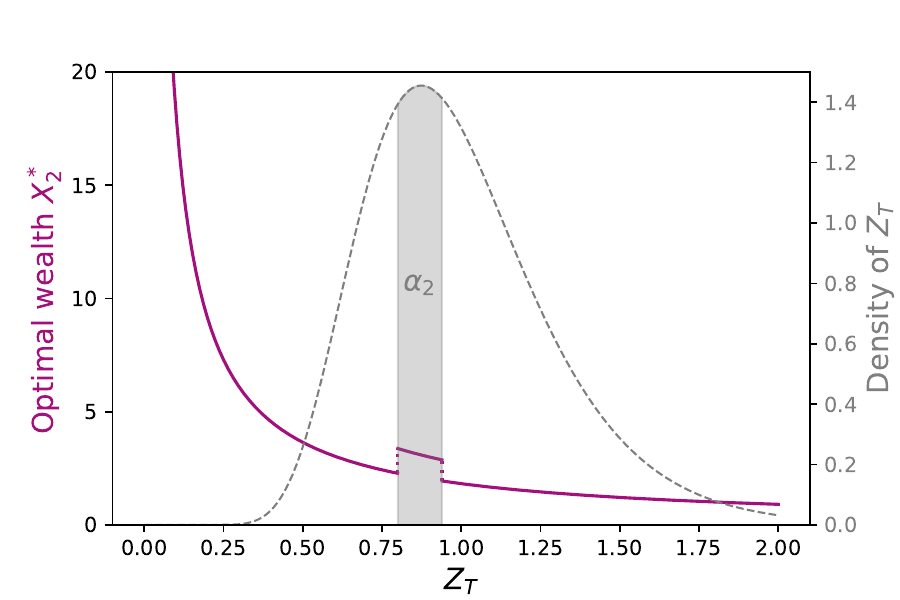}
         \caption{$A_2 = [0.8, 0.9391]$}
         \label{fig:five over x}
     \end{subfigure}
     \hfill
     \begin{subfigure}[b]{0.49\textwidth}
         \centering
         \includegraphics[width=\textwidth]{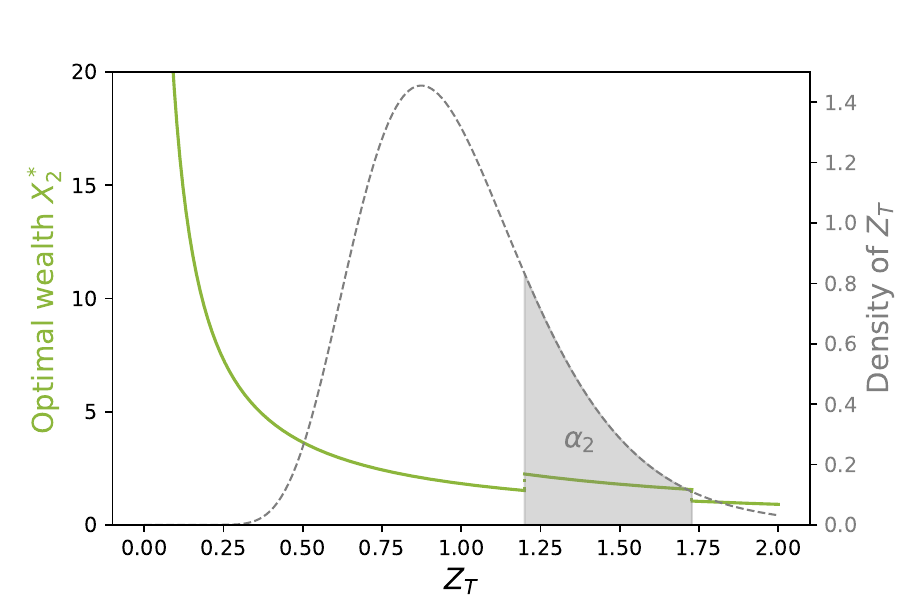}
         \caption{$A_2 = [1.2, 1.7274]$}
         \label{fig:five over x2}
     \end{subfigure}
        \caption{Terminal wealth $X_2^*$ of agent 2 in the Nash equilibrium from Theorem \ref{thm:nash_log2} in terms of $Z_T$ for $\beta_1 = 0.9$, $\alpha_2=0.2,$ and different choices of the set $A_2$.}
        \label{fig:diff_sets}
    \end{minipage}
\end{figure}

\textcolor{black}{Finally, let us take a look at the replicating strategies determined in Theorem~\ref{thm:repl_strat}. Figure~\ref{fig:strategy_log} shows a comparison of one path (i.e. for one realization of the path $(Z_t),\, t\in [0,T],$ of the state price density) of the replicating strategies in terms of invested amounts in the benchmark case without the VaR-based constraint  and for the Nash equilibrium. We displayed three different choices of the set $A$ from Theorem~\ref{thm:repl_strat}. Each set is of the form $A=\{c_1 < Z_T < c_2\}$, where $c_1$ takes values in $\{0,0.75, 1.2\}$ and $c_2$ is defined as in \eqref{eq:upper_bound}. We notice that the highest fluctuation of the amount invested into the stock appears towards the end of the time interval. This is a result from the VaR-based constraint which compares the wealth of agent 2 to the wealth of her competitor at the terminal time $T$. Thus, at the beginning of the investment period, the dominating investment motive is the own utility. Moreover, we notice that the investment behavior associated to $c_1 = 0$ and $c_1 = 0.75$ is a lot riskier in the depicted scenario than the benchmark portfolio towards the end of the time interval, while $c_1 = 1.2$ appears to be less risky as the amount invested into the stock is always smaller than the benchmark portfolio.  However, the behavior heavily depends on the specific path evolution. See also Figure~\ref{fig:strategy_realizations} which displays five different realizations of the replicating portfolio processes for $X_2^*$ from Theorem~\ref{thm:nash_log2}~c). For each path, we used $c_1 = 0$, i.e. the insured set is $A = \{Z_T \leq z_{\alpha_2}\}$. The plot shows a strong difference between the different realizations, especially when comparing the first, second, and fifth path. In general however, one can conclude that the probability constraint becomes important towards the end of the trading horizon and leads at least in some cases to risky and very volatile investment strategies. These are scenarios where towards the end we have a close competition between agents.
Figure~\ref{fig:wealth_log} displays the wealth processes associated to the strategies from Figure~\ref{fig:strategy_log}. The processes behave quite similar for the first part of the time interval. The largest difference can again be observed towards the terminal time $T$. 
Finally, Figure~\ref{fig:wealth_realizations} shows the wealth processes to the strategies shown in Figure~\ref{fig:strategy_realizations}. The target terminal wealth $\beta_1 X_1^*$ is shown as a square in the respective color at the terminal time. We notice that the target is achieved in one of the five realizations which aligns with the parameter choice of $\alpha_2 = 0.2$ in this case. Moreover, it is interesting to see that the simulation confirms that the insured set consists of scenarios where we have a good market performance and the target is hit precisely (of course this is only possible here since we assume no market frictions). In those scenarios where the target is not met, we can see that agents are quite far away from the target and underperform significantly.}

\begin{figure}
    \centering
    \includegraphics[width=\linewidth]{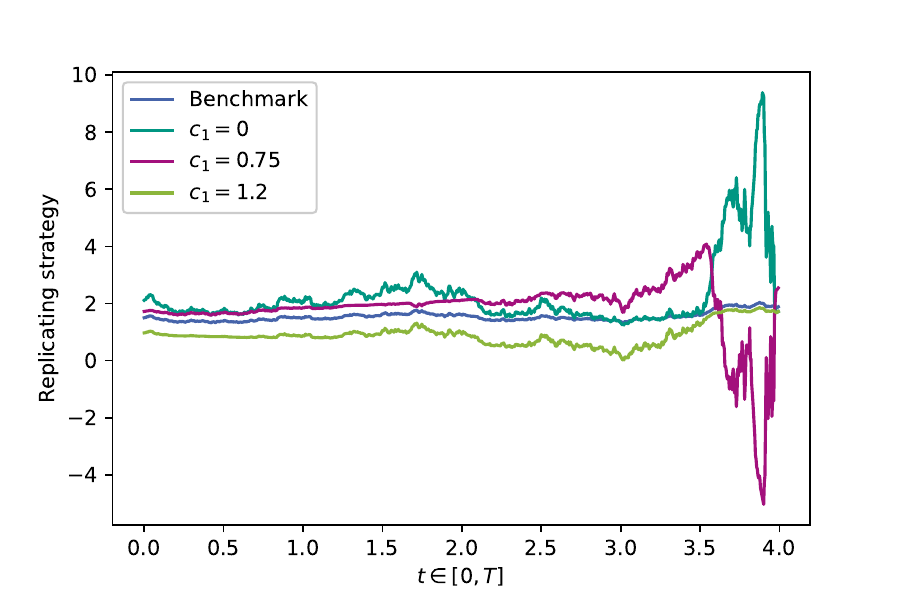}
    \caption{\textcolor{black}{Replicating strategy $\pi_2^*(t),$ $t\in [0,T]$, for the terminal wealth $X_2^*$ in Theorem~\ref{thm:nash_log2} c) for $c_1\in \{0, 0.75, 1.2\}$ and the model parameters $\mu = 0.03, \,\sigma=0.2,\, T = 4,\, \alpha_2 = 0.2,\, \beta_1 = 0.9,\, x_0^1 = 3,\, x_0^2 = 2$.}}
    \label{fig:strategy_log}
\end{figure}

\begin{figure}
    \centering
    \includegraphics[width=\linewidth]{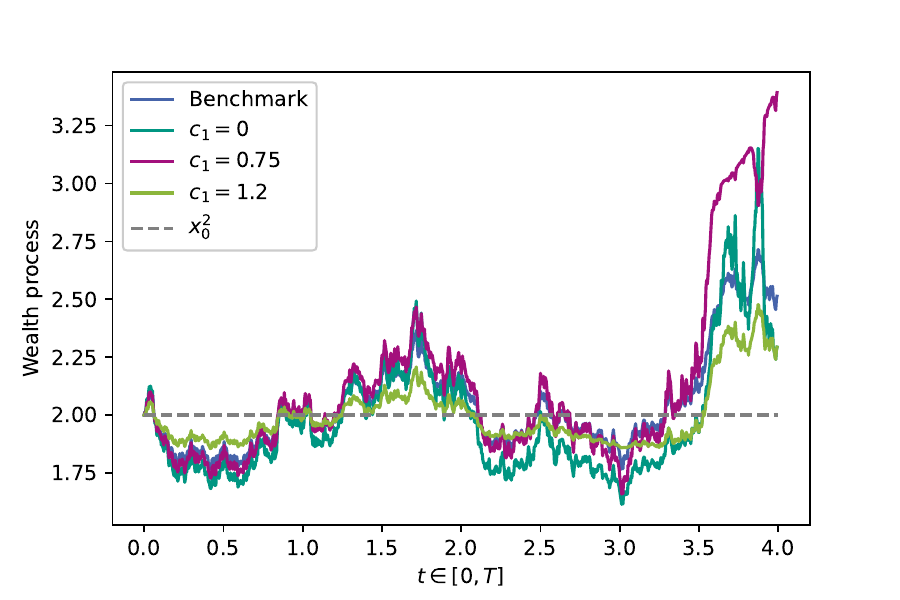}
    \caption{\textcolor{black}{Wealth process $X_2^*(t),\, t\in [0,T],$ for the terminal wealth $X_2^*$ in Theorem~\ref{thm:nash_log2} c) for $c_1\in \{0, 0.75, 1.2\}$ and the model parameters $\mu = 0.03,\, \sigma=0.2,\, T = 4,\, \alpha_2 = 0.2,\, \beta_1 = 0.9,\, x_0^1 = 3,\, x_0^2 = 2$.}}
    \label{fig:wealth_log}
\end{figure}

\begin{figure}
    \centering
    \includegraphics[width=\linewidth]{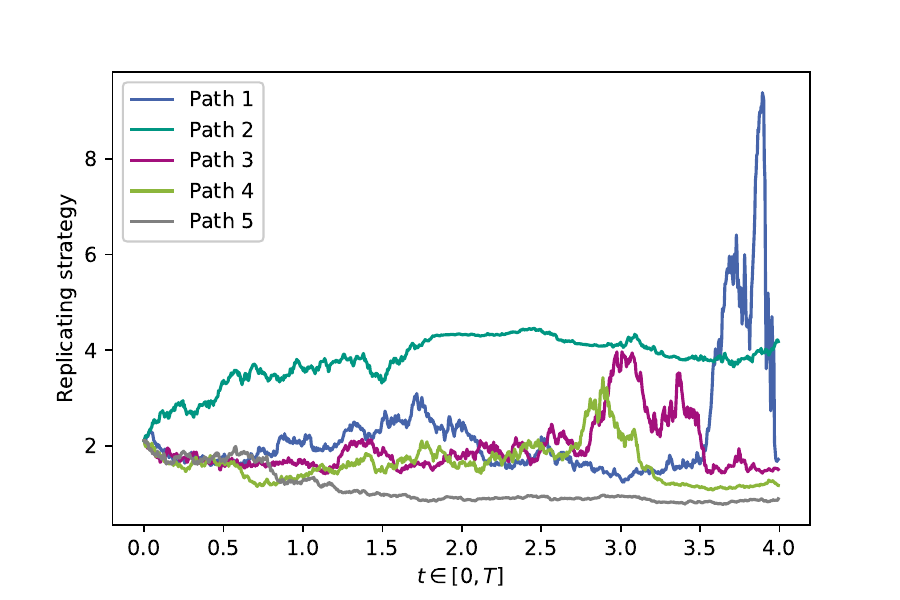}
    \caption{\textcolor{black}{Five realizations of the replicating strategy $\pi_2^*(t),$ $t\in [0,T]$, for the terminal wealth $X_2^*$ in Theorem~\ref{thm:nash_log2} c) for $c_1 = 0$ and the model parameters $\mu = 0.03,\, \sigma=0.2,\, T = 4, \,\alpha_2 = 0.2,\, \beta_1 = 0.9,\, x_0^1 = 3,\, x_0^2 = 2$.}}
    \label{fig:strategy_realizations}
\end{figure}

\begin{figure}
    \centering
    \includegraphics[width=\linewidth]{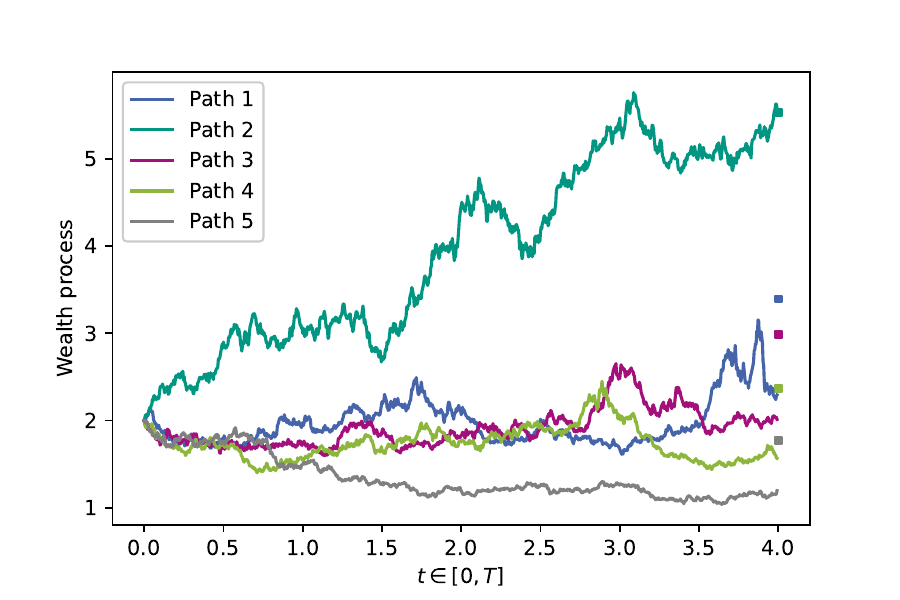}
    \caption{\textcolor{black}{Five realizations of the wealth process $X_2^*(t),$ $t\in [0,T]$, associated to the terminal wealth $X_2^*$ in Theorem~\ref{thm:nash_log2} c) for $c_1 = 0$ and the model parameters $\mu = 0.03, \,\sigma=0.2,\, T = 4,\, \alpha_2 = 0.2, \,\beta_1 = 0.9,\, x_0^1 = 3,\, x_0^2 = 2$. The weighted terminal wealth $\beta_1 X_1^*$ of agent 1 is marked by a square in the respective color.}}
    \label{fig:wealth_realizations}
\end{figure}

\subsection{\textcolor{black}{Two agent case with power utility}}

Next, we consider the Nash equilibrium from Theorem~\ref{lem:power} in the power case. Since we assumed $Z_T$ to be lognormally distributed with parameters $\nu$ and $\tau^2$, we can explicitly calculate $\varepsilon_\gamma$ and $\E\big[Z_T^{1-1/\gamma} \1 \{ Z_T \geq z_{1-\alpha_2}\}\big]$ to obtain
\begin{align*}
    \varepsilon_\gamma &= \exp\bigg( \Big(1-\frac{1}{\gamma}\Big)\nu + \frac{\tau^2}{2}\Big(1-\frac{1}{\gamma} \Big)^2 \bigg),\\
    \E\big[Z_T^{1-1/\gamma} \1 \{ Z_T \geq z_{1-\alpha_2}\}\big] &= \varepsilon_\gamma \Phi\bigg(- \frac{\mathrm{ln}(z_{1-\alpha_2}) - \nu - \tau^2(1-1/\gamma)}{\tau} \bigg),
\end{align*}
where $\Phi$ denotes the cumulative distribution function of the standard normal distribution. Figures \ref{fig:x1x2_power}, \ref{fig:nash_power_influence_alpha} and \ref{fig:nash_power_influence_beta} illustrate the Nash equilibrium in terms of $Z_T$ for the parameter choice $x_0^1 = 3$, $x_0^2 = 2$, $\gamma = 0.7$ and $\alpha_i,\beta_i$, $i=1,2,$ so that \eqref{eq:case2} holds.

Figure~\ref{fig:x1x2_power} shows the Nash equilibrium $(X_1^*, X_2^*)$ as a function of $Z_T$ for $\alpha_2 = 0.5$ and $\beta_1 = 0.9$. The purple and black solid line show the terminal wealth of agents 1 and 2 in the Nash equilibrium. For comparison, we also illustrated the unconstrained optimal terminal wealth of agent 2 as the green dashed line. Similar to the logarithmic case, the terminal wealth of agent 1 is again the solution to the standard problem without the VaR-based constraint and is thus continuous. The terminal wealth $X_2^*$ of agent 2 is smaller than the unconstrained terminal wealth for $Z_T < z_{1-\alpha_2}$ and larger for $Z_T \geq z_{1-\alpha_2}$ due to the structure of $X_2^*$ shown in Theorem~\ref{lem:power}. {\color{black} Note that due to $\gamma=0.7$ we are here in the setting where insured states are those with poor market performance.}
Figure~\ref{fig:nash_power_influence_alpha} illustrates the influence of the parameter $\alpha_2$ on $X_2^*$.  As expected, the location of the discontinuity of $X_2^*$ is decreasing in terms of $\alpha_2$ as it is simply given as the $(1-\alpha_2)$-quantile of $Z_T$. Moreover, we notice that for $Z_T < z_{\alpha_2}$ the value of $X_2^*$ is decreasing in $\alpha_2$. This results from the budget constraint $\E[Z_T X_2^*] = x_0^2$ since for larger $\alpha_2$, the value of $X_2^*$ is kept on the larger value $\beta_1 X_1^*$ on a larger interval. {\color{black} This leads to the effect that a high probability for the constraint in return implies a higher underperformance on uninsured sates. }

Finally, Figure \ref{fig:nash_power_influence_beta} shows the terminal wealth $X_2^*$ of agent 2 in the Nash equilibrium for different choices of the parameter $\beta_1$. We notice a change of order of the value of $X_2^*$ for the different choices of $\beta_1$ located at the discontinuity $z_{1-\alpha_2}.$ For $Z_T \geq z_{1-\alpha_2}$, the value of $X_2^*$ is largest for the largest choice of $\beta_1$ as the value of $X_2^*$ is kept at $\beta_1 X_1^*$ in this case. For $Z_T < z_{1-\alpha_2}$, the order of the values is opposite, i.e. the largest choice of $\beta_1$ yields the smallest value of $X_2^*$. This is again due to the budget constraint $\E[Z_T X_2^*] = x_0^2$.
 {\color{black} As in the logarithmic case  the parameter $\beta_1$ has no influence on the set of insured states.}

\begin{figure}
   \centering
    \includegraphics[width=\linewidth]{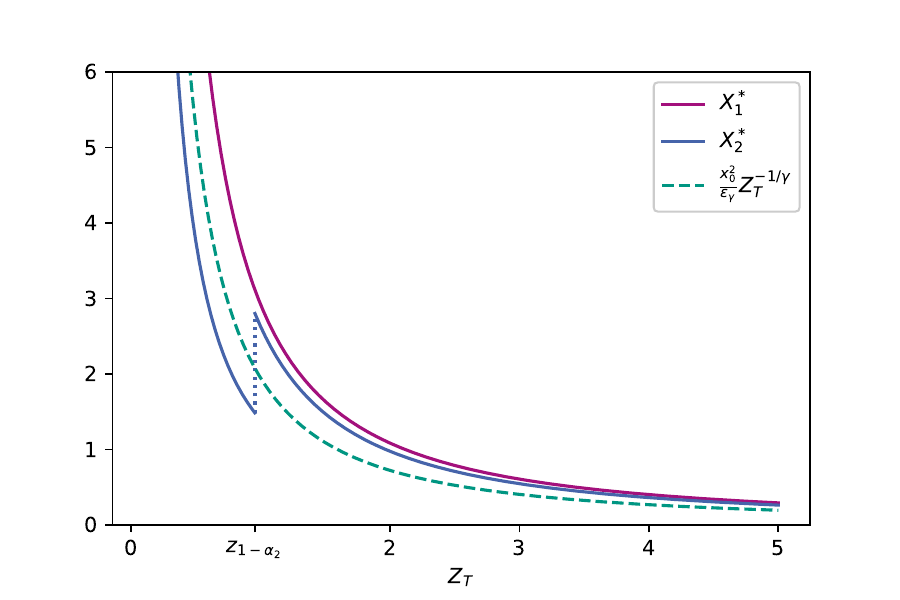}
    \caption{Nash equilibrium $(X_1^*, X_2^*)$ from Theorem \ref{lem:power} in terms of $Z_T$ for $\alpha_2 = 0.5$, $\beta_1 = 0.9$ and $\gamma = 0.7$.}
    \label{fig:x1x2_power} 
\end{figure}

\begin{figure}
    \begin{subfigure}[b]{0.49\textwidth}
         \centering
         \includegraphics[width=\textwidth]{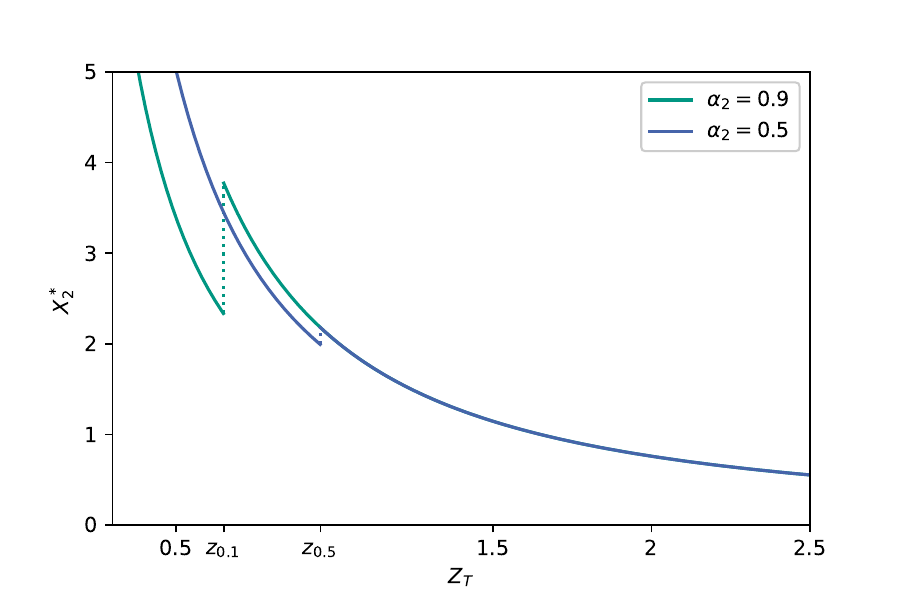}
         \caption{$\beta_1 = 0.9$ and $\alpha_2 \in \{0.5,0.9\}$}
    \label{fig:nash_power_influence_alpha}
     \end{subfigure}
     \hfill
     \begin{subfigure}[b]{0.49\textwidth}
         \centering
         \includegraphics[width=\textwidth]{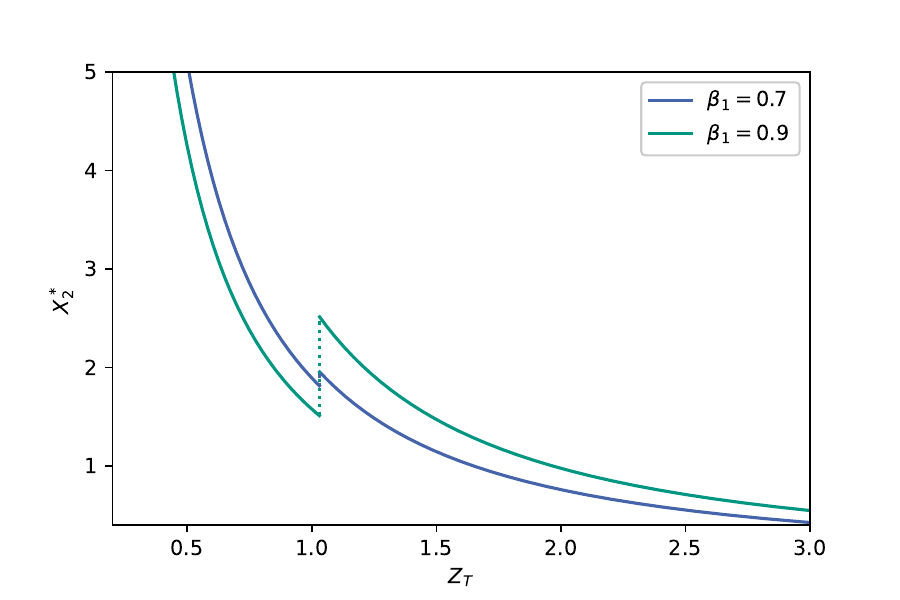}
         \caption{$\alpha_2 = 0.2$ and $\beta_1 \in \{0.7, 0.9\}$}
        \label{fig:nash_power_influence_beta}
     \end{subfigure}
     \caption{Terminal wealth $X_2^*$ of agent 2 in the Nash equilibrium from Theorem~\ref{lem:power} in terms of $Z_T$ for $\gamma = 0.7$ and different values of $\alpha_2$ and $\beta_1$.}
     \label{fig:x1x2_power_infl}
\end{figure}

\subsection{\textcolor{black}{More than two agents with logarithmic utility}}

It remains to discuss Nash equilibria for more than two agents using logarithmic utilities. Here, we restrict to the case that the sum $\alpha_1 + \ldots + \alpha_n$ is less or equal to $1$. Thus,  we can use Theorem~\ref{thm:nash_logn} to compute the terminal wealth of the agents in the Nash equilibrium. Figure \ref{fig:nash_log4} displays the terminal wealths of $n=4$ agents in the Nash equilibrium for the parameter choice $\alpha_i = 0.2$, $\beta_{{\color{black}ij}} = 0.3$ for all $i,j\in \{1,\ldots,4\}, i\neq j$ and $x_0^1 = 5$, $x_0^2 = 4$, $x_0^3 = 3$, $x_0^4 = 2$. The sets $A_i$, $i=1,\ldots,4,$ are chosen as $A_i = (z_{\alpha_1 + \ldots + \alpha_{i-1}}, z_{\alpha_1 + \ldots + \alpha_i}]$, $i=1,\ldots,4$, where $z_{\alpha_1 + \ldots + \alpha_i} := 0$ for $i=0$. We notice that the terminal wealth of agent 4 is larger than the terminal wealth of agent 3 on the set $A_4$ although agent 4 starts with a smaller initial capital. However, this results in a terminal wealth on $A_4^c$ that is significantly smaller than the optimal terminal wealth in the respective unconstrained problems. A comparison of $X_3^*$ and $X_4^*$ to the solutions of the respective unconstrained problems can be found in Figures \ref{fig:unconstrained3} and \ref{fig:unconstrained4}.
{\color{black} To sum up, we obviously see that wealthy investors are not or only marginally affected by the competition, whereas poorer agents deviate dramatically from their benchmark optimal strategy, leading to very volatile and risky investments at least in some scenarios. Moreover, in order to obtain a Nash equilibrium, the agents have to communicate and identify their sets where they want to ensure the target. }
\begin{figure}
    \centering
    \includegraphics[width=\linewidth]{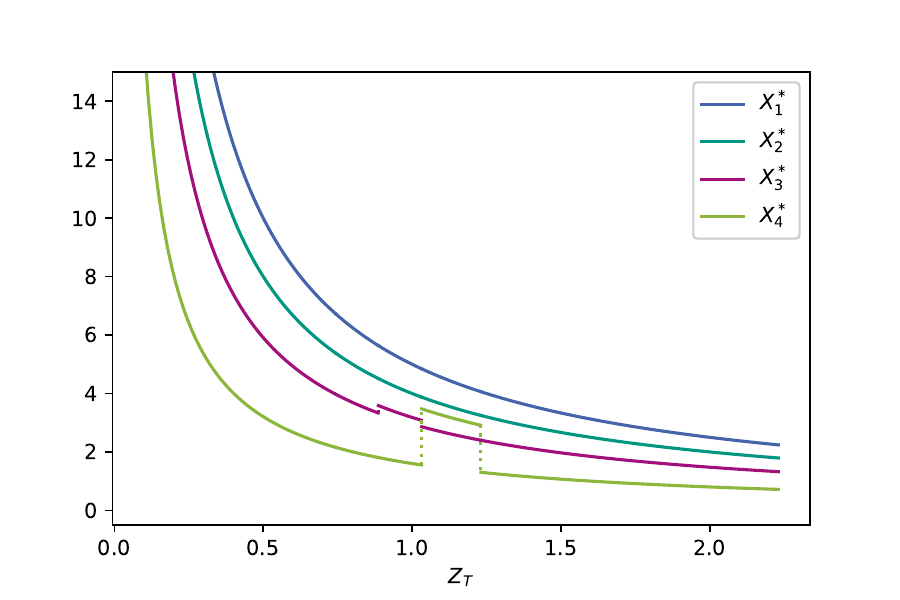}
    \caption{Nash equilibrium $(X_1^*, X_2^*, X_3^*, X_4^*)$ from Theorem \ref{thm:nash_logn} in terms of $Z_T$ for $\alpha_i = 0.2$, $\beta_{{\color{black}ij}} = 0.3$ for $i\in \{1,\ldots,4\}, i\neq j$, $x_0^1 = 5$, $x_0^2 = 4$, $x_0^3 = 3$, $x_0^4 = 2$ and $A_i = (z_{\alpha_1 + \ldots + \alpha_{i-1}}, z_{\alpha_1 + \ldots + \alpha_i}]$, $i=1,\ldots,4$, where $z_{\alpha_1 + \ldots + \alpha_i} := 0$ for $i=0$.}
    \label{fig:nash_log4}
\end{figure}

\begin{figure}
     \centering
     \begin{subfigure}[b]{0.49\textwidth}
         \centering
         \includegraphics[width=\textwidth]{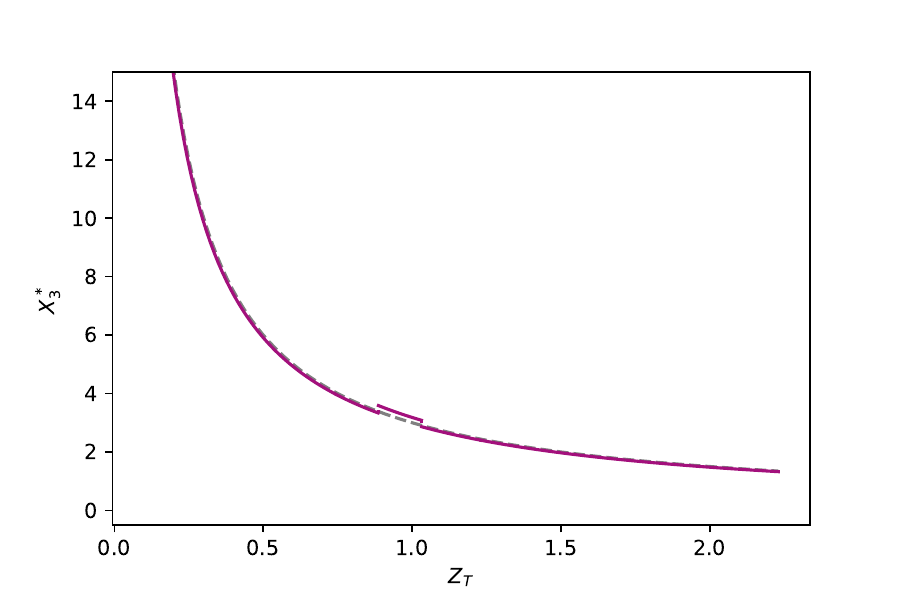}
         \caption{Agent 3}
    \label{fig:unconstrained3}
     \end{subfigure}
     \hfill
     \begin{subfigure}[b]{0.49\textwidth}
         \centering
         \includegraphics[width=\textwidth]{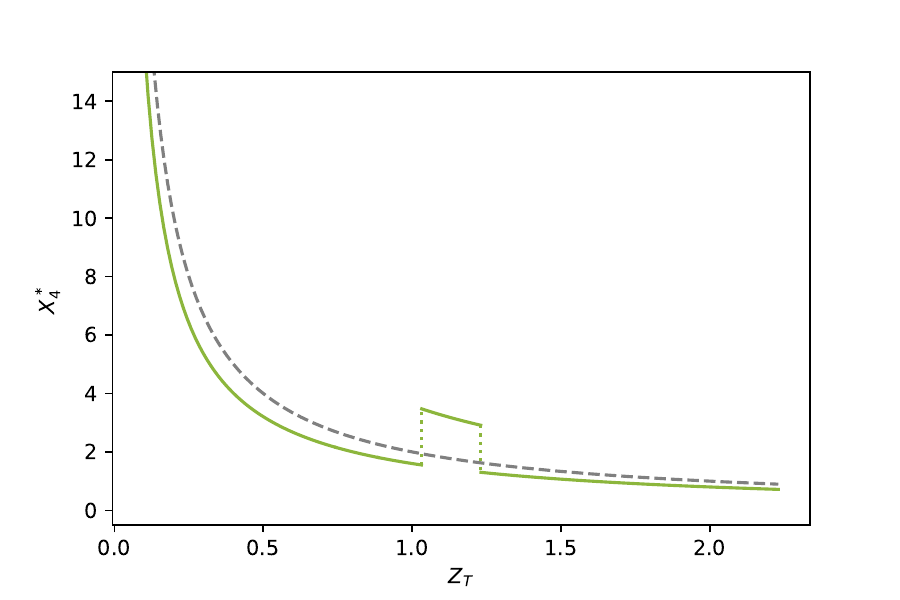}
         \caption{Agent 4}
        \label{fig:unconstrained4}
     \end{subfigure}
     \caption{Terminal wealth $X_j^*$ of agents $j$, $j\in \{3,4\},$ in the Nash equilibrium from Theorem \ref{thm:nash_logn} (solid) and the associated optimal terminal wealth in the unconstrained problem (dashed) in terms of $Z_T$.}
\end{figure}

\afterpage{\clearpage}

\newpage

\section{Conclusion}
It is meaningful to consider competing agents by introducing probability constraints instead of relative wealth targets, because this is in line with observations from psychologists and resembles incentives for fund managers. However, it turns out that finding Nash equilibria is in general more challenging. An interesting case which is analytically solvable is the case that all investors have a logarithmic utility function. In this case, Nash equilibria (if they exist) can be found for an arbitrary number of agents. In order to succeed, a number of parameter cases have to be distinguished leading either to no Nash equilibrium, a unique Nash equilibrium or an infinite number of Nash equilibria. We illustrate our findings by numerical examples. {\color{black} It turns out that wealthy investors are not or only marginally affected by the competition, whereas poorer agents deviate dramatically from their benchmark optimal strategy, leading to very volatile and risky investments at least in some scenarios.  }  For other utility functions, and in particular, if the agents have different utility functions or also different information, the problem is much more demanding and we leave it for future research.

\section{Appendix}
\subsection{Utility maximization with lower bounds I}\label{sec:A1}

Suppose  $Y\ge 0$ is an arbitrary $\mathcal{F}-$measurable random variable, $U:[0,\infty)\to \R$ a strictly increasing, strictly concave and differentiable utility. We want to solve the following problem
$$(P_Y) \left\{ \begin{array}{l}
\EE  U(X) \to \max\\
X \ge Y,\; X \mbox{ is } \mathcal{F}-\mbox{measurable,}\\
\EE[Z_TX]=x_0.
\end{array}\right.$$
Obviously $x_0\ge \EE[Z_TY]$ since otherwise it is not possible to fulfill the constraint.
We claim that the optimal solution is given as follows (cp. \cite{el2005optimal}, Prop. 2.2 for a special case).
\begin{lemma}\label{lem:bound}
The optimal solution of problem $(P_Y)$ is given by
 $$ X^* = \max\Big\{Y, I(\lambda Z_T)\Big\},$$
where $\lambda>0  $ is such that $\EE[Z_TX^*]=x_0$ and $I$ is the inverse function of $U'.$  It is the unique solution up to sets of measure zero.  
\end{lemma}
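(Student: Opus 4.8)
The plan is to use the standard martingale/duality (Lagrangian) method for utility maximization in a complete market, exploiting that the only genuine constraints are the static budget equation $\EE[Z_T X] = x_0$ and the pathwise lower bound $X \ge Y$. First I would introduce a multiplier $\lambda > 0$ for the budget constraint and consider, for each fixed $\lambda$, the relaxed problem of maximizing $\EE[U(X) - \lambda Z_T X]$ over all $X \ge Y$. Since the integrand decouples across states, this can be solved pointwise: for almost every $\omega$ I maximize the concave map $x \mapsto U(x) - \lambda Z_T(\omega) x$ over $x \ge Y(\omega)$. Its unconstrained maximizer is the point where $U'(x) = \lambda Z_T(\omega)$, i.e.\ $x = I(\lambda Z_T(\omega))$; by concavity the map is increasing to the left of this point and decreasing to its right, so the maximizer on $[Y(\omega), \infty)$ is $\max\{Y(\omega), I(\lambda Z_T(\omega))\}$. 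This already produces the candidate $X^* = \max\{Y, I(\lambda Z_T)\}$.

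Second, I would fix $\lambda$ so that $X^*$ is budget-feasible. Here I would study $g(\lambda) := \EE[Z_T \max\{Y, I(\lambda Z_T)\}]$ and argue that it is continuous and strictly decreasing in $\lambda$ (using that $I = (U')^{-1}$ is strictly decreasing), with $g(\lambda) \downarrow \EE[Z_T Y]$ as $\lambda \to \infty$ and $g(\lambda) \uparrow \infty$ as $\lambda \to 0$. Since feasibility of $(P_Y)$ forces $x_0 \ge \EE[Z_T Y]$, the intermediate value theorem then yields a unique $\lambda > 0$ with $g(\lambda) = x_0$; monotone/dominated convergence would justify the continuity and the two limits.

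Third comes the verification of optimality, which is the clean payoff of the pointwise construction. For any admissible $X$ (so $X \ge Y$ and $\EE[Z_T X] = x_0$), pointwise maximality gives $U(X^*) - \lambda Z_T X^* \ge U(X) - \lambda Z_T X$ almost surely; taking expectations and cancelling the equal budget terms $\lambda x_0$ yields $\EE[U(X^*)] \ge \EE[U(X)]$. Uniqueness up to null sets follows from strict concavity of $U$: the pointwise maximizer of $x \mapsto U(x) - \lambda Z_T x$ on $[Y, \infty)$ is strict, so the equality $\EE[U(X)] = \EE[U(X^*)]$ forces $U(X) - \lambda Z_T X = U(X^*) - \lambda Z_T X^*$ a.s., hence $X = X^*$ a.s.

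I expect the main obstacle to be the rigorous multiplier step and integrability rather than the heuristic: one must ensure $g$ is finite and well-behaved so that the limits and continuity hold, and that $U(X^*)$ is integrable so $\EE[U(X^*)]$ is meaningful. Minor additional care is needed when the range of $U'$ does not cover all of $(0,\infty)$, which can be absorbed by reading $I$ as the generalized inverse. Everything else is routine once the pointwise optimization is in hand.
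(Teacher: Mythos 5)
Your proposal is correct and follows essentially the same route as the paper: a martingale-method verification in which the candidate $\max\{Y, I(\lambda Z_T)\}$ is shown optimal by combining concavity of $U$ with the budget identity $\EE[Z_T(X-X^*)]=0$ (the paper phrases this via the gradient inequality $U(X)\le U(X^*)+U'(X^*)(X-X^*)$ rather than pointwise Lagrangian maximization, but the two are the same argument), and uniqueness rests on strict concavity in both versions. Your additional intermediate-value discussion of the existence of $\lambda$ is a point the paper leaves implicit, and your uniqueness step via strictness of the pointwise maximizer is an equally valid alternative to the paper's convex-combination argument.
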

\begin{proof}
    Obviously $X^*$ is admissible for $(P_Y)$.
In order to show optimality, let $X$ be another feasible random variable. Since $U$ is concave and differentiable it holds that
$$ U(X)\le U(X^*)+U'(X^*) (X-X^*).$$
Now we have to show $\EE [U'(X^*) (X-X^*)]\le 0.$ Let $A:= \{X^*= I(\lambda Z_T)\}.$ Observe first that
$$U'(X^*) = \1_A U'( I(\lambda Z_T)) + \1_{A^c} U'(Y)= \1_A \lambda Z_T + \1_{A^c} U'(Y).$$
Thus, we obtain
\begin{align*}
U'(X^*) (X-X^*) &= \left[  \1_A \lambda Z_T + \1_{A^c} U'(Y) \right] (X-X^*)\\
&=  \lambda {Z_T}  (X-X^*) +  \1_{A^c}\big(U'(Y)-\lambda Z_T \big)(X-X^*)\\
&=  \lambda {Z_T}  (X-X^*) +  \1_{A^c}\big(U'(Y)-\lambda Z_T \big)(X-Y).
\end{align*}
The last equation is true since on $A^c$ we have $X^*=Y.$ Now we   take the expectation on both sides. Since $X$ and $X^*$ are both feasible we have $\EE[Z_T (X-X^*) ]=x_0-x_0=0.$ Thus
$$\EE [U'(X^*) (X-X^*)] =  \EE \left[ \1_{A^c}\big(U'(Y)-\lambda Z_T \big)(X-Y) \right] \le 0.  $$
The inequality is true since $X\ge Y$ and $$ U'(Y) \le \lambda Z_T \Leftrightarrow Y \ge I(\lambda Z_T)$$
which is true on $A^c.$ Finally we show uniqueness. Suppose that both $X^*$ and $\tilde X$ are optimal, i.e.\ $\EE  U(X^*)=\EE  U(\tilde X)$ and both are admissible. Consider $X := \alpha X^* + (1-\alpha) \tilde X$ for $\alpha\in (0,1).$ Obviously $X$ is again admissible and $$ \EE  U(X) > \alpha \EE  U(X^*)+(1-\alpha) \EE  U(\tilde X)$$
as long as $X^*$ and $\tilde X$ do not coincide outside a set of measure zero, which leads to the contradiction that $X$ attains a higher value.
\end{proof}

If $U(x)=\ln(x)$ we obtain $I(x)=1/x.$ Thus, with a slight misuse of the parameter $\lambda$ (instead of $\lambda$ we rather consider $1/\lambda$), we obtain in the situation of Lemma \ref{lem:bound} that $X^* = \max\Big\{Y, \lambda/ Z_T\Big\}$ where $\lambda  $ is such that $\EE[Z_TX^*]=x_0$. 

\subsection{Utility maximization with lower bounds II}\label{sec:A1+}
Now we consider the  problem with logarithmic utility where the constraint only has to be satisfied on a subset of $\Omega.$ We assume here that $Y>0.$ Then the optimization problem reads as
$$(P_A) \left\{ \begin{array}{l}
\EE  \ln (X) \to \max\\
X \ge Y \1_A,\; X \mbox{ is } \mathcal{F}-\mbox{measurable}, A \in \mathcal{F},\\
\PP(A)=\alpha,\\
\EE[Z_TX]=x_0.
\end{array}\right.$$
Note that the optimization is over $X$ and the set $A$ here.
Obviously, $x_0\ge \EE[Z_TY \1_A]$ has to be fulfilled for an $A\in \mathcal{F}$, otherwise it is not possible to fulfill the constraint. Let us define
$$ M_\lambda := \{ Y \le \lambda/Z_T\}$$
and let $\lambda_\alpha := \inf\{\lambda : \PP(M_\lambda)\ge \alpha\}.$ Note that if $\lambda_\alpha \le x_0$ then the problem is trivial and the optimal solution is given by $X^*=x_0/Z_T.$

\begin{lemma}\label{lem:bound2}
The optimal solution of problem $(P_A)$ is given by
 $$ X^* = \1_{M_{\lambda_\alpha}}\max\Big\{Y,  \lambda/Z_T\Big\}+ \1_{M_{\lambda_\alpha}^c}  \lambda/Z_T$$
where $\lambda  $ is such that $\EE[Z_TX^*]=x_0$.
\end{lemma}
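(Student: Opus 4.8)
The plan is to prove optimality by a Lagrangian verification argument with an indicator penalty, in the same spirit as the proof of Theorem~\ref{lem:power}; this has the advantage that it treats all feasible pairs $(X,A)$ at once and so sidesteps a separate rearrangement optimization over the location of the set $A$. (For intuition one may instead fix $A$, apply Lemma~\ref{lem:bound} to the lower bound $Y\1_A$ to see that the inner optimum is $\max\{Y\1_A,\lambda_A/Z_T\}$, and then argue that $A=M_{\lambda_\alpha}$ is cheapest; but the verification below is self-contained.) I would first dispose of the trivial case $\lambda_\alpha\le x_0$, where $X^*=x_0/Z_T$ is the unconstrained log-optimal wealth and is feasible because $\PP(M_{x_0})\ge\alpha$, hence dominates every competitor. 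So assume $\lambda_\alpha>x_0$. Writing $Z_T X^*=\1_{M_{\lambda_\alpha}}\max\{Z_T Y,\lambda\}+\lambda\1_{M_{\lambda_\alpha}^c}$, the map $\lambda\mapsto\EE[Z_T X^*]$ is continuous and strictly increasing, equals $\lambda_\alpha>x_0$ at $\lambda=\lambda_\alpha$, and tends to $\EE[Z_T Y\1_{M_{\lambda_\alpha}}]$ as $\lambda\downarrow 0$; since feasibility of $(P_A)$ forces $x_0\ge\EE[Z_T Y\1_{M_{\lambda_\alpha}}]$ (the cheapest admissible set is $M_{\lambda_\alpha}$), the intermediate value theorem yields a unique $\lambda\in(0,\lambda_\alpha)$ with $\EE[Z_T X^*]=x_0$. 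One then checks that $X^*=\lambda/Z_T$ on $M_{\lambda_\alpha}^c\cup M_\lambda$ and $X^*=Y$ on $M_{\lambda_\alpha}\setminus M_\lambda$, so that $\{X^*\ge Y\}=M_{\lambda_\alpha}$ and therefore $\PP(X^*\ge Y)=\alpha$.

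For the verification I would introduce, for a constant $\eta\ge 0$ to be fixed, the pointwise Lagrangian
\begin{equation*}
L(x,\omega):=\ln x-\frac{Z_T}{\lambda}x+\eta\,\1\{x\ge Y\},\qquad x>0.
\end{equation*}
Ignoring the indicator, $x\mapsto\ln x-\frac{Z_T}{\lambda}x$ is concave with maximizer $\lambda/Z_T$. On $M_\lambda=\{Z_T Y\le\lambda\}$ this maximizer already satisfies $x\ge Y$, so it maximizes $L$ and coincides with $X^*$. On $M_\lambda^c$ the only competitor to $\lambda/Z_T$ is the smallest constraint-feasible value $x=Y$, and a short computation shows that choosing $x=Y$ is preferable precisely when $\eta\ge\phi(Z_T Y/\lambda)$, where $\phi(u):=u-1-\ln u$ is nonnegative and strictly increasing on $[1,\infty)$. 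Setting $\eta:=\phi(\lambda_\alpha/\lambda)\ge 0$ and using monotonicity of $\phi$, this threshold condition becomes $Z_T Y\le\lambda_\alpha$, i.e.\ $\omega\in M_{\lambda_\alpha}$. Hence the pointwise maximizer of $L(\cdot,\omega)$ is $Y$ on $M_{\lambda_\alpha}\setminus M_\lambda$ and $\lambda/Z_T$ elsewhere, which is exactly $X^*$.

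It then remains to convert pointwise optimality into the global statement. For any admissible pair $(X,A)$ the constraint $X\ge Y\1_A$ gives $\{X\ge Y\}\supseteq A$, hence $\PP(X\ge Y)\ge\PP(A)=\alpha$. Writing $\ln X=L(X,\cdot)+\frac{1}{\lambda}Z_T X-\eta\1\{X\ge Y\}$, taking expectations, and using $\EE[Z_T X]=x_0$, $\EE[L(X,\cdot)]\le\EE[L(X^*,\cdot)]$, and $\eta\ge 0$ with $\PP(X\ge Y)\ge\alpha=\PP(X^*\ge Y)$, I obtain
\begin{equation*}
\EE[\ln X]\le\EE[L(X^*,\cdot)]+\frac{x_0}{\lambda}-\eta\alpha=\EE[\ln X^*],
\end{equation*}
which is the claim. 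The main obstacle, and the step where the location of $A$ really enters, is the calibration of $\eta$: one must pick the single constant penalty so that the pointwise indifference between obeying and ignoring the constraint falls exactly on the level set $\{Z_T Y=\lambda_\alpha\}$. This is what forces the insured region to sit where $Z_T Y$ is smallest, and simultaneously makes the penalty term telescope cleanly against the measure constraint $\PP(X\ge Y)\ge\alpha$.
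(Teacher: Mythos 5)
Your proof is correct, but it takes a genuinely different route from the paper's. The paper proves Lemma~\ref{lem:bound2} by first reducing to the transformed variables $\tilde X = XZ_T$, $\tilde Y = YZ_T$, then discretizing $\tilde Y$, replacing $\tilde X$ by conditional expectations on the cells (Jensen), and finally using a swap/rearrangement argument based on Schur-concavity of $(x_1,\dots,x_m)\mapsto\sum_k\ln x_k$ to show that the insured set must sit where $YZ_T$ is smallest; general $Y$ is then handled by an (informally justified) approximation and limit. You instead run a Lagrangian verification with a constant indicator penalty $\eta=\phi(\lambda_\alpha/\lambda)$, $\phi(u)=u-1-\ln u$, calibrated so that pointwise indifference between obeying and ignoring the constraint occurs exactly on $\{YZ_T=\lambda_\alpha\}$ — which is precisely the technique the paper itself uses for the power-utility case in Theorem~\ref{lem:power} and Appendix~\ref{sec:A2}, just not for this lemma. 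Your computations check out: the candidate maximizers of $L(\cdot,\omega)$ on $M_\lambda^c$ are indeed only $\lambda/Z_T$ and $Y$, the threshold condition $\eta\ge\phi(Z_TY/\lambda)$ is equivalent to $Z_TY\le\lambda_\alpha$ by monotonicity of $\phi$ on $[1,\infty)$, and the telescoping against $\PP(X\ge Y)\ge\alpha$ is sound. What your approach buys: it treats $(X,A)$ jointly, avoids discretization entirely, and thereby patches the soft limiting step at the end of the paper's proof; it also unifies the log and power cases methodologically. What it costs: the final identity $\EE[\eta\1\{X^*\ge Y\}]=\eta\alpha$ needs $\PP(M_{\lambda_\alpha})=\alpha$ exactly, i.e.\ no atom of $YZ_T$ at $\lambda_\alpha$ (a hypothesis the lemma's statement already implicitly presumes); and a verification argument by itself only yields sufficiency, whereas the paper's rearrangement comes closer to the necessity of the form, which is invoked downstream when best responses are asserted to \emph{have} to look like $X^*$ — though uniqueness up to null sets can be recovered from strict concavity exactly as in the proof of Lemma~\ref{lem:bound}, so this is a presentational rather than a substantive gap.
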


\begin{proof}
    First it is slightly more convenient to transform the random variables as follows. Define
    \begin{align}
        \tilde Y := Y Z_T, \mbox{  and  } \tilde X := X Z_T.
    \end{align}
    Then, instead of $(P_A)$ we can consider
   $$(\tilde P_{ A}) \left\{ \begin{array}{l}
\EE  \ln (\tilde X) \to \max\\
\tilde X \ge \tilde Y \1_A,\; \tilde X \mbox{ is } \mathcal{F}-\mbox{measurable}, A \in \mathcal{F},\\
\PP(A)=\alpha,\\
\EE[\tilde X]=x_0.
\end{array}\right.$$

We begin with the special case that $\tilde Y$ is discrete and has finitely many different values, i.e.
$$ \tilde Y = \sum_{k=1}^m y_k 1 _{A_k}$$
for a partition $(A_k)_{k=1,\ldots,m}$ of $\Omega.$ Suppose $\tilde X$ is admissible for $(\tilde P_{A})$ and define $\tilde A := \{ \tilde X \ge \tilde Y\}$. Since $\tilde X$ is admissible we must have $\PP(\tilde A) \ge \alpha.$ Further let
\begin{align}
     B_k := A_k \cap \tilde A, \quad C_k := A_k \cap \tilde A^c.
\end{align}
I.e.\ we split the sets $A_k$ where $\tilde Y$ is constant into those parts where $\tilde{X}\geq \tilde{Y}$ and those where $\tilde{X} < \tilde{Y}$. We define now a new random variable $\hat X$ by
\begin{align}
    \hat X := \sum_{k=1}^m x_k^1 1 _{B_k}+ \sum_{k=1}^m x_k^2 1 _{C_k},
\end{align}
where 
\begin{align}
    x_k^1 := \frac{1}{\PP(B_k)} \int_{B_k} \tilde X d\PP, \quad  x_k^2 := \frac{1}{\PP(C_k)} \int_{C_k} \tilde X d\PP.
\end{align}
This means we replace $\tilde X$ on the sets $B_k, C_k$ by the corresponding expectation. Note that $\hat X$ is again admissible for $(\tilde P_{ A})$ since on $B_k$ we have $\tilde X \ge y_k$ and thus $x_k^1\ge y_k.$ Moreover, we obtain that $\EE \ln(\hat X) \ge \EE \ln(\tilde X)$ because due to the Jensen inequality we have (we denote by $\PP_{B_k}$ the conditional probability of $\PP$ given $B_k$, i.e. $\PP_{B_k}(D) = \PP(D\cap B_k)/\PP(B_k)$) that
\begin{align}
     \int_{B_k} \ln(\tilde X)  d\PP & =  \PP(B_k) \int \ln(\tilde X) d\PP_{B_k} \le  \PP(B_k) \ln\Big( \int \tilde Xd\PP_{B_k}\Big) =  \PP(B_k) \ln(x_k^1)
\end{align}
and the same for the sets $C_k$. Summing up these integrals we see that the expected utility of $\hat X$ is not less than the expected utility for $\tilde X.$ Thus, we can restrict the optimization to random variables $\hat X$ which are discrete and have finitely many positive values.

Fix an admissible discrete $\hat X$ and assume that there exists a measurable set $F\subset B_k$ for an arbitrary $k$ and a measurable set $\tilde F\subset C_j$ for $j\neq k$ and with $\PP(F)=\PP(\tilde F)>0$ such that $y_k>y_j.$ Note that $\PP(F)$ may be arbitrary small. This means that the constraint is satisfied on a larger level $y_k$ whereas it is not satisfied on a smaller level $y_j.$ By construction, the random variable $\hat X$ takes value $x_k^1$ on set $B_k$ and value $x_j^2$ on $C_j.$ Thus we have 
$$ x_j^2 < y_j < y_k \le x_k^1. $$
Now define the random variable
$$ X^* := \hat X \1_{(F\cup \tilde F)^c} + y_j \1_{\tilde F}+ (x_j^2+x_k^1-y_j) \1_F.$$
Note that $x_j^2+x_k^1-y_j>0$ and $\EE \hat X=\EE X^*.$ Moreover, $X^*$ also satisfies the constraint due to our construction. Let us consider the difference in expected utility of $\hat X$ and $X^*:$
\begin{align}
    \EE \ln(X^*) - \EE \ln(\hat X)= \PP(F) \Big(\ln(y_j)+\ln(x_j^2+x_k^1-y_j)-\ln(x_k^1)-\ln(x_j^2)\Big)>0.
\end{align}
The latter inequality follows since $\R^m\ni (x_1,\ldots,x_m)\mapsto \sum_{k=1}^m \ln(x_k)$ is Schur-concave (see \cite{marshall1979inequalities}, Chapt.1, Sec. A). Thus, we obtain that it is always better to satisfy the constraint on a set where $Y$ takes the smallest values. This implies the statement for discrete $Y$. In order to show the statement for arbitrary $Y$, approximate $Y$ by a sequence of discrete $(Y_n)$ almost surely. Taking the limit $n\to \infty$ then implies the general result.
\end{proof}

\subsection{Maximizing L}\label{sec:A2}
Recall that 
 $$ L(X,\lambda_2,\eta_2) := U(X) -\lambda_2 Z_T X +\eta_2 \1_{[X\ge \beta_1 I(\kappa Z_T)]}$$
 with $\lambda_2, \eta_2 \ge 0$ fixed. We show that $L$ is maximized by $X_2^*$ from \eqref{eq:NEpower2}. Obviously the maximum points can either be $I(\lambda_2 Z_T)$ or $\beta_1 I(\kappa Z_T)$ where $I(\lambda_2 Z_T) < \beta_1 I(\kappa Z_T).$ We can compare the two possible values of the function $L$:
 \begin{align*}
     F(Z_T) := U(I(\lambda_2 Z_T))- \lambda_2 Z_T I(\lambda_2 Z_T) - \Big\{U(\beta_1 I(\kappa Z_T)) -\lambda_2 Z_T \beta_1 I(\kappa Z_T)+\eta_2 \Big\}. 
 \end{align*}
Differentiating $F$ yields
\begin{align*}
    F'(Z_T) &= \lambda_2 \big(\beta_1 I(\kappa Z_T) -  I(\lambda_2 Z_T)\big) - \beta_1 \kappa Z_T I'(\kappa Z_T)(\kappa \beta_1^{-\gamma}-\lambda_2) \\
    &= Z_T^{-1/\gamma} \bigg(\lambda_2 \Big(\beta_1 \kappa^{-1/\gamma} - \lambda_2^{-1/\gamma} \Big) + \frac{\beta_1 \kappa^{-1/\gamma}}{\gamma} \Big(\kappa \beta_1^{-\gamma} - \lambda_2 \Big) \bigg) =: Z_T^{-1/\gamma} f(\lambda_2).
\end{align*}

Since $Z_T>0$, it suffices to consider the sign of $f(\lambda_2)$ to discuss the monotonicity of $F$. We observe that 
$$ f\big(\kappa \beta_1^{-\gamma} \big) = 0.$$
Moreover, differentiating $f$ yields 
\begin{align*}
    f'(\lambda_2) &= \Big(1-1/\gamma \Big)\Big(\beta_1 \kappa^{-1/\gamma} - \lambda_2^{-1/\gamma}\Big),
\end{align*}
so that $f'(\lambda_2) < 0$ for all $\lambda_2 > \kappa \beta_1^{-\gamma}$ {\color{black}(in case $0<\gamma <1$. The inequality reverses if $\gamma >1$)}. Thus, since we already saw in the proof of Theorem~\ref{lem:power} that $\lambda_2 > \kappa\beta_1^{-\gamma}$, we deduce that $F'(Z_T) < 0$ for all values of $Z_T$. Moreover, by definition of $\eta_2$, we have $F(z_{1-\alpha_2}) = 0$ which implies that $X_2^*$ maximizes the function $L$.

\subsection{\textcolor{black}{Replicating the terminal wealth in the Nash equilibrium}\label{sec:A_strategy}}
\textcolor{black}{
In the following, we provide an auxiliary statement used to find the replicating strategies for the Nash equilibria found in Theorem~\ref{thm:nash_log2} in the special case of a Black-Scholes market. Thus, we use the financial market explained at the beginning of Section~\ref{sec:FM}. Additionally, we introduce the state price density process given by 
$$ Z_t = \exp\Big(-\theta^\top W_t - \frac{1}{2}\Vert \theta \Vert^2 t \Big),\quad 0\leq t \leq T.$$
Then a straightforward application of Theorem~E.1 in \cite{jin2008behavioral} yields the following result, where $\varphi$ and $\Phi$ denote the density and cumulative distribution function of the standard normal distribution. }

\textcolor{black}{\begin{lemma}\label{lem:replicating}
Let $0\leq c_1 < c_2 \leq \infty$ and $X = Z_T^{-1} \mathbbm{1}\{ c_1 < Z_T < c_2\}$. If $0<c_1 < c_2 < \infty$, the wealth-portfolio pair replicating $X$ is given by 
\begin{align*}
    X(t) &= \frac{1}{Z_t}\Bigg( \Phi\Bigg(\frac{\log(c_2) - \log(Z_t) + \frac{1}{2}\Vert \theta \Vert^2 (T-t)}{\Vert \theta \Vert \sqrt{T-t}} \Bigg) - \Phi\Bigg(\frac{\log(c_1) - \log(Z_t) + \frac{1}{2}\Vert \theta \Vert^2 (T-t)}{\Vert \theta \Vert \sqrt{T-t}} \Bigg) \Bigg),\\
    \pi(t) &= \Bigg[  X(t) + \frac{1}{Z_t \Vert \theta \Vert \sqrt{T-t}}\Bigg( \varphi \Bigg( \frac{\log(c_2) - \log(Z_t) + \frac{1}{2}\Vert \theta\Vert^2 (T-t)}{\Vert \theta \Vert \sqrt{T-t}} \Bigg) \\
    &  \hspace{4.5cm} -  \varphi \Bigg( \frac{\log(c_1) - \log(Z_t) + \frac{1}{2}\Vert \theta\Vert^2 (T-t)}{\Vert \theta \Vert \sqrt{T-t}} \Bigg)\Bigg)\Bigg]\big(\sigma \sigma^\top\big)^{-1} \mu.
\end{align*}
The portfolio strategy $\pi$ describes the amount of money invested into the $d$ stocks at each time $t\in [0,T)$. Results for the cases $c_1 = 0$ and $c_2 = \infty$ can be obtained by taking the respective limits. 
\end{lemma}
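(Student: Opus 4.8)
The plan is to use the fundamental fact that in a complete market the wealth process replicating an attainable terminal wealth $X$ is the state-price-deflated conditional expectation $X(t) = Z_t^{-1}\EE[Z_T X\mid\mathcal{F}_t]$, and then to extract the hedging portfolio by matching martingale parts. For the payoff at hand a decisive simplification occurs: since $X = Z_T^{-1}\1\{c_1 < Z_T < c_2\}$, we have $Z_T X = \1\{c_1 < Z_T < c_2\}$, so that $X(t) = Z_t^{-1}\PP(c_1 < Z_T < c_2\mid\mathcal{F}_t)$. This is exactly the kind of kernel-dependent claim for which Theorem~E.1 of \cite{jin2008behavioral} delivers a ready-made hedging formula; in the write-up I would either cite it directly or reproduce the self-contained two-step argument sketched below.

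First I would compute the conditional probability. Writing $Z_T = Z_t\exp(-\theta^\top(W_T - W_t) - \frac{1}{2}\|\theta\|^2(T-t))$ and using that $G := \theta^\top(W_T - W_t)/(\|\theta\|\sqrt{T-t})$ is standard normal and independent of $\mathcal{F}_t$, each one-sided event $\{c_1 < Z_T\}$ and $\{Z_T < c_2\}$ translates into a half-line for $G$. A short rearrangement gives $\{c_1 < Z_T\} = \{G < -f(c_1,t)\}$ and $\{Z_T < c_2\} = \{G > -f(c_2,t)\}$, where $f(c,t) = \frac{\log c - \log Z_t + \frac{1}{2}\|\theta\|^2(T-t)}{\|\theta\|\sqrt{T-t}}$, so that $\PP(c_1 < Z_T < c_2\mid\mathcal{F}_t) = \Phi(f(c_2,t)) - \Phi(f(c_1,t))$. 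This yields the stated wealth process and, incidentally, exhibits $Z_t X(t) = \Phi(f(c_2,t)) - \Phi(f(c_1,t))$ as a bounded $\PP$-martingale.

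Next I would read off the portfolio. Applying It\^o to the martingale $Z_t X(t) = \Phi(f(c_2,t)) - \Phi(f(c_1,t))$ — whose only stochastic driver enters through $\log Z_t$, with $dW$-part $-\theta^\top dW_t$, on which $f(c,\cdot)$ depends linearly — and using $\Phi' = \varphi$, the diffusion coefficient is $\big(\varphi(f(c_2,t)) - \varphi(f(c_1,t))\big)\theta^\top/(\|\theta\|\sqrt{T-t})$. On the other hand, from the self-financing dynamics $dX(t) = \pi(t)^\top(\mu\,dt + \sigma\,dW_t)$ and $dZ_t = -Z_t\theta^\top dW_t$, the diffusion part of $d(Z_t X(t))$ equals $Z_t\big(\pi(t)^\top\sigma - X(t)\theta^\top\big)\,dW_t$. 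Equating the two diffusion coefficients, solving for $\pi(t)^\top$, and using $\theta = \sigma^{-1}\mu$ together with the identity $\theta^\top\sigma^{-1} = \mu^\top(\sigma\sigma^\top)^{-1}$ (from symmetry of $(\sigma\sigma^\top)^{-1}$) gives precisely the claimed formula for $\pi(t)$.

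The main obstacle is bookkeeping rather than conceptual: keeping the Gaussian rearrangement consistent with the sign conventions in $f$, so the lower cut-off $c_1$ contributes $-\Phi(f(c_1,t))$ and the upper cut-off $c_2$ contributes $+\Phi(f(c_2,t))$, and then correctly rotating the market-price-of-risk direction $\theta$ back into the Merton direction $(\sigma\sigma^\top)^{-1}\mu$. The degenerate cases $c_1 = 0$ and $c_2 = \infty$ require only a brief limiting remark: as $c\to 0$ one has $f(c,t)\to -\infty$, hence $\Phi(f(c,t))\to 0$ and $\varphi(f(c,t))\to 0$, while $c\to\infty$ gives $f(c,t)\to +\infty$, $\Phi\to 1$ and $\varphi\to 0$; substituting these limits into the two formulas recovers the open-ended intervals.
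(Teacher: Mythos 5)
Your argument is correct and yields exactly the stated formulas, but it follows a genuinely different route from the paper. The paper's proof is a two-line delegation: it invokes Theorem~E.1 of \cite{jin2008behavioral} and only supplies the auxiliary lognormal integral
\begin{equation}
\int_a^b \frac{1}{y}\,\varphi\Big(\frac{\log y-\nu}{\tau}\Big)\,\mathrm{d}y=\tau\Big(\Phi\big(\tfrac{\log b-\nu}{\tau}\big)-\Phi\big(\tfrac{\log a-\nu}{\tau}\big)\Big),
\end{equation}
which is what that cited theorem needs to produce both $X(t)$ and $\pi(t)$ for a payoff of the form $g(Z_T)$. You instead give a self-contained derivation: you identify $Z_tX(t)=\PP(c_1<Z_T<c_2\mid\mathcal{F}_t)$ via the risk-neutral pricing identity, compute this conditional probability explicitly from the Gaussian increment $G=\theta^\top(W_T-W_t)/(\|\theta\|\sqrt{T-t})$ (your sign bookkeeping is right: $\{c_1<Z_T\}=\{G<-f(c_1,t)\}$, $\{Z_T<c_2\}=\{G>-f(c_2,t)\}$, giving $\Phi(f(c_2,t))-\Phi(f(c_1,t))$), and then recover $\pi(t)$ by matching the diffusion coefficient of the martingale $Z_tX(t)$ against the self-financing dynamics, using $\theta^\top\sigma^{-1}=\mu^\top(\sigma\sigma^\top)^{-1}$. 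I verified the It\^o matching and it reproduces the stated $\pi(t)$ exactly, including the limiting statements for $c_1=0$ and $c_2=\infty$. What your approach buys is independence from the external reference and a transparent view of where each term in $\pi(t)$ comes from (the $X(t)$ term from the $dZ$ contribution, the $\varphi$ terms from the delta of the conditional probability); what the paper's approach buys is brevity, since Theorem~E.1 already packages the conditional-expectation and differentiation steps for general payoffs $g(Z_T)$.
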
}
\textcolor{black}{\begin{proof}
    The result is a straightforward application of Theorem~E.1 in \cite{jin2008behavioral} using the following auxiliary calculation:
    \begin{align*}
        \int_a^b \frac{1}{y} \varphi\Big(\frac{\log(y) - \nu}{\tau} \Big) \mathrm{d}y &= \tau \int_a^b  \underbrace{\frac{1}{\tau y} \varphi\Big(\frac{\log(y) - \nu}{\tau} \Big)}_{(*)}\mathrm{d}y \\
        &= \tau \left(\Phi\Big(\frac{\log(b) - \nu}{\tau} \Big)  - \Phi\Big(\frac{\log(a) - \nu}{\tau} \Big)\right),
    \end{align*}
    where we used that $(*)$ is the density function of a lognormal distribution with parameters $\nu$ and $\tau^2$ and that the associated cumulative distribution function is given by $\Phi((\log(\cdot) - \nu)/ \tau)$. 
\end{proof} }
\bibliographystyle{apalike}
\bibliography{literatur}

\begin{thebibliography}{}

\bibitem[Anthropelos et~al., 2022]{anthropelos2022competition}
Anthropelos, M., Geng, T., and Zariphopoulou, T. (2022).
\newblock Competition in fund management and forward relative performance
  criteria.
\newblock {\em SIAM Journal on Financial Mathematics}, 13(4):1271--1301.

\bibitem[Aydo{\u{g}}an and Steffensen, 2024]{aydougan2024optimal}
Aydo{\u{g}}an, B. and Steffensen, M. (2024).
\newblock Optimal investment strategies under the relative performance in
  jump-diffusion markets.
\newblock {\em Decisions in Economics and Finance}, pages 1--26.

\bibitem[Basak and Makarov, 2014]{basak2014strategic}
Basak, S. and Makarov, D. (2014).
\newblock Strategic asset allocation in money management.
\newblock {\em The Journal of finance}, 69(1):179--217.

\bibitem[Basak and Makarov, 2015]{basak2015competition}
Basak, S. and Makarov, D. (2015).
\newblock Competition among portfolio managers and asset specialization.
\newblock {\em Paris December 2014, Finance Meeting EUROFIDAI-AFFI Paper}.

\bibitem[Basak and Shapiro, 2001]{basak2001value}
Basak, S. and Shapiro, A. (2001).
\newblock Value-at-risk-based risk management: Optimal policies and asset
  prices.
\newblock {\em The review of financial studies}, 14(2):371--405.

\bibitem[B{\"a}uerle and Chen, 2023]{bauerle2022optimal}
B{\"a}uerle, N. and Chen, A. (2023).
\newblock Optimal investment under partial information and robust var-type
  constraint.
\newblock {\em International Journal of Theoretical and Applied Finance},
  26(4-5).

\bibitem[B{\"a}uerle and G{\"o}ll, 2023]{bauerle2023nash}
B{\"a}uerle, N. and G{\"o}ll, T. (2023).
\newblock Nash equilibria for relative investors via no-arbitrage arguments.
\newblock {\em Mathematical Methods of Operations Research}, 97(1):1--23.

\bibitem[B{\"a}uerle and G{\"o}ll, 2024]{bauerle2024nash}
B{\"a}uerle, N. and G{\"o}ll, T. (2024).
\newblock Nash equilibria for relative investors with (non) linear price
  impact.
\newblock {\em Mathematics and Financial Economics}, pages 1--22.

\bibitem[Bell and Cover, 1980]{bell1980competitive}
Bell, R.~M. and Cover, T.~M. (1980).
\newblock Competitive optimality of logarithmic investment.
\newblock {\em Mathematics of Operations Research}, 5(2):161--166.

\bibitem[Brown et~al., 2001]{brown2001careers}
Brown, S.~J., Goetzmann, W.~N., and Park, J. (2001).
\newblock Careers and survival: Competition and risk in the hedge fund and
  {CTA} industry.
\newblock {\em The Journal of Finance}, 56(5):1869--1886.

\bibitem[Browne, 1999]{browne1999reaching}
Browne, S. (1999).
\newblock Reaching goals by a deadline: Digital options and continuous-time
  active portfolio management.
\newblock {\em Advances in Applied Probability}, 31(2):551--577.

\bibitem[Curatola, 2024]{curatola2024asset}
Curatola, G. (2024).
\newblock Asset prices when large investors interact strategically.
\newblock {\em Quantitative Finance}, pages 1--18.

\bibitem[Dos~Reis and Platonov, 2021]{dos2019forward}
Dos~Reis, G. and Platonov, V. (2021).
\newblock Forward utilities and mean-field games under relative performance
  concerns.
\newblock In Bernardin, C., Golse, F., Gon{\c{c}}alves, P., Ricci, V., and
  Soares, A.~J., editors, {\em From Particle Systems to Partial Differential
  Equations: International Conference, Particle Systems and PDEs VI, VII and
  VIII, 2017-2019}, volume 352, pages 227--251, Cham. Springer.

\bibitem[El~Karoui et~al., 2005]{el2005optimal}
El~Karoui, N., Jeanblanc, M., and Lacoste, V. (2005).
\newblock Optimal portfolio management with american capital guarantee.
\newblock {\em Journal of Economic Dynamics and Control}, 29(3):449--468.

\bibitem[Espinosa, 2010]{espinosa2010stochastic}
Espinosa, G.-E. (2010).
\newblock {\em Stochastic Control Methods for Optimal Portfolio Investment}.
\newblock PhD thesis, Ecole Polytechnique Paris.

\bibitem[Espinosa and Touzi, 2015]{espinosa2015optimal}
Espinosa, G.-E. and Touzi, N. (2015).
\newblock Optimal investment under relative performance concerns.
\newblock {\em Mathematical Finance}, 25(2):221--257.

\bibitem[F{\"o}llmer and Schied, 2016]{follmer2016stochastic}
F{\"o}llmer, H. and Schied, A. (2016).
\newblock {\em Stochastic Finance: An Introduction in Discrete Time}.
\newblock Walter de Gruyter, Berlin/Boston, 4. edition.

\bibitem[Fu, 2023]{fu2023mean2}
Fu, G. (2023).
\newblock Mean field portfolio games with consumption.
\newblock {\em Mathematics and Financial Economics}, 17(1):79--99.

\bibitem[Fu and Zhou, 2023]{fu2023mean}
Fu, G. and Zhou, C. (2023).
\newblock Mean field portfolio games.
\newblock {\em Finance and Stochastics}, 27(1):189--231.

\bibitem[Gabih et~al., 2006]{gabih2006optimal}
Gabih, A., Grecksch, W., Richter, M., and Wunderlich, R. (2006).
\newblock Optimal portfolio strategies benchmarking the stock market.
\newblock {\em Mathematical Methods of Operations Research}, 64:211--225.

\bibitem[Gabih et~al., 2005]{gabih2005dynamic}
Gabih, A., Grecksch, W., and Wunderlich, R. (2005).
\newblock Dynamic portfolio optimization with bounded shortfall risks.
\newblock {\em Stochastic analysis and applications}, 23(3):579--594.

\bibitem[Gabih et~al., 2009]{gabih2009utility}
Gabih, A., Sass, J., and Wunderlich, R. (2009).
\newblock Utility maximization under bounded expected loss.
\newblock {\em Stochastic Models}, 25(3):375--407.

\bibitem[Hu and Zariphopoulou, 2022]{hu2021n}
Hu, R. and Zariphopoulou, T. (2022).
\newblock $n$-player and mean-field games in {I}t\^{o}-diffusion markets with
  competitive or homophilous interaction.
\newblock In {\em Stochastic Analysis, Filtering, and Stochastic Optimization}.
  Springer, Cham.

\bibitem[Jin and Yu~Zhou, 2008]{jin2008behavioral}
Jin, H. and Yu~Zhou, X. (2008).
\newblock Behavioral portfolio selection in continuous time.
\newblock {\em Mathematical Finance: An International Journal of Mathematics,
  Statistics and Financial Economics}, 18(3):385--426.

\bibitem[Kempf and Ruenzi, 2008]{kempf2008tournaments}
Kempf, A. and Ruenzi, S. (2008).
\newblock Tournaments in mutual-fund families.
\newblock {\em The Review of Financial Studies}, 21(2):1013--1036.

\bibitem[Korn, 1997]{korn1997optimal}
Korn, R. (1997).
\newblock {\em Optimal portfolios: {S}tochastic models for optimal investment
  and risk management in continuous time}.
\newblock World scientific, Singapore, 1. edition.

\bibitem[Korn and Korn, 2013]{korn2013optionsbewertung}
Korn, R. and Korn, E. (2013).
\newblock {\em Optionsbewertung und {P}ortfolio-{O}ptimierung: Moderne
  {M}ethoden der {F}inanzmathematik}.
\newblock Springer-Verlag.

\bibitem[Korn and Lindberg, 2014]{korn2014portfolio}
Korn, R. and Lindberg, C. (2014).
\newblock Portfolio optimization for an investor with a benchmark.
\newblock {\em Decisions in Economics and Finance}, 37:373--384.

\bibitem[Kovenock and Roberson, 2021]{kovenock2021generalizations}
Kovenock, D. and Roberson, B. (2021).
\newblock Generalizations of the general lotto and {C}olonel {B}lotto games.
\newblock {\em Economic Theory}, 71:997--1032.

\bibitem[Kraft et~al., 2020]{kraft2020dynamic}
Kraft, H., Meyer-Wehmann, A., and Seifried, F.~T. (2020).
\newblock Dynamic asset allocation with relative wealth concerns in incomplete
  markets.
\newblock {\em Journal of Economic Dynamics and Control}, 113:103857.

\bibitem[Lacker and Soret, 2020]{lacker2020many}
Lacker, D. and Soret, A. (2020).
\newblock Many-player games of optimal consumption and investment under
  relative performance criteria.
\newblock {\em Mathematics and Financial Economics}, 14(2):263--281.

\bibitem[Lacker and Zariphopoulou, 2019]{lacker2019mean}
Lacker, D. and Zariphopoulou, T. (2019).
\newblock Mean field and $n$-agent games for optimal investment under relative
  performance criteria.
\newblock {\em Mathematical Finance}, 29(4):1003--1038.

\bibitem[Marshall and Olkin, 1979]{marshall1979inequalities}
Marshall, A. and Olkin, I. (1979).
\newblock {\em Inequalities: Theory of Majorization and Its Applications}.
\newblock Academic Press, Inc.

\bibitem[Merton, 1969]{merton1969lifetime}
Merton, R.~C. (1969).
\newblock Lifetime portfolio selection under uncertainty: {T}he continuous-time
  case.
\newblock {\em The review of Economics and Statistics}, 51(3):247--257.

\bibitem[Musiela and Zariphopoulou, 2006]{musiela2006investments}
Musiela, M. and Zariphopoulou, T. (2006).
\newblock Investments and forward utilities.
\newblock {\em preprint}.

\bibitem[Sass and Wunderlich, 2010]{sass2010optimal}
Sass, J. and Wunderlich, R. (2010).
\newblock Optimal portfolio policies under bounded expected loss and partial
  information.
\newblock {\em Mathematical Methods of Operations Research}, 72:25--61.

\bibitem[Tversky and Kahneman, 1992]{tversky1992advances}
Tversky, A. and Kahneman, D. (1992).
\newblock Advances in prospect theory: Cumulative representation of
  uncertainty.
\newblock {\em Journal of Risk and uncertainty}, 5:297--323.

\end{thebibliography}

\end{document}